\newcommand{\B}{\mathbf{b}}
\newcommand{\Zeta}{\mathbb{Z}}
\newcommand{\Z}{\mathbb{Z}}
\newcommand{\R}{\mathbb{R}}
\newcommand{\C}{\mathbb{C}}
\newcommand{\al}{\alpha}
\newcommand{\Ll}{\Lambda}
\newcommand{\Q}{\widetilde{Q}}
\newcommand{\de}{\delta}
\newtheorem{theorem}{Theorem}[section]
\newtheorem{lemma}[theorem]{Lemma}
\theoremstyle{definition}
\newtheorem{definition}[theorem]{Definition}
\newtheorem{example}[theorem]{Example}
\newtheorem{cor}[theorem]{Corollary}
\theoremstyle{remark}
\newtheorem{remark}[theorem]{Remark}
\begin{document}

\title{Approximation by crystal-refinable functions}

\author{Ursula Molter \and Mar\'ia del Carmen Moure \and Alejandro Quintero}


\address[U. Molter] {             Universidad de Buenos Aires,
Facultad de Ciencias Exactas y Naturales,
Departamento de Matem\'atica,
Buenos Aires, Argentina, CONICET-Universidad de Buenos Aires,
Instituto de Investigaciones Matem\'aticas Luis A. Santalo (IMAS).
Buenos Aires, Argentina}
             \email{umolter@dm.uba.ar}         
\address[M.C. Moure and A. Quintero]{Centro Marplatense de Investigaciones Matem\'aticas, Facultad de Ciencias Exactas y
  Naturales,  Universidad Nacional de Mar del Plata, Funes 3350, 7600
  Mar del Plata, Argentina.}
            \email{mcmoure@mdp.edu.ar}  \email{aquinter@mdp.edu.ar}


\maketitle

\begin{abstract}
Let $\Gamma$ be a crystal group in $\R^d$. A function $\varphi:\R^d\longrightarrow \C$ is said to be {\em crystal-refinable} (or $\Gamma-$refinable) if it is a linear combination of finitely many of the rescaled and translated functions $\varphi(\gamma^{-1}(ax))$, where the {\em translations} $\gamma$ are taken on a crystal group $\Gamma$, and $a$ is an expansive dilation matrix such that $a\Gamma a^{-1}\subset\Gamma.$ A $\Gamma-$refinable function $\varphi: \R^d \rightarrow \C$ satisfies a refinement equation $\varphi(x)=\sum_{\gamma\in\Gamma}d_\gamma \varphi(\gamma^{-1}(ax))$ with $d_\gamma \in \C$. Let $\mathcal S(\varphi)$ be the linear span of  $\{\varphi(\gamma^{-1}(x)): \gamma \in \Gamma\}$ and $\mathcal{S}^h=\{f(x/h):f\in\mathcal{S(\varphi)}\}$. One important property of $\mathcal S(\varphi)$ is, how well it approximates functions in $L^2(\R^d)$. This property is very closely related to the {\em crystal-accuracy} of $\mathcal S(\varphi)$, which is the highest degree $p$ such that all multivariate polynomials $q(x)$ of ${\rm degree}(q)<p$ are exactly reproduced from elements in $\mathcal S(\varphi)$. In this paper, we determine the accuracy $p$ from the coefficients $d_\gamma$. Moreover, we obtain from our conditions, a characterization of accuracy for a particular lattice refinable  vector function $F$, which simplifies the classical conditions.
\keywords{Crystal groups \and Approximation property \and Accuracy \and Refinement equation \and Composite dilations}
\end{abstract}

\section{Introduction}

Crystal groups (Crystallographic groups or space groups), are groups of
isometries of $\mathbb{R}^{d}$ that generalize the notion of translations along
a lattice, allowing to move using different (rigid)
movements in $\mathbb{R}^{d}$ following a bounded pattern that is
repeated until it fills up space.
Precisely  (see \cite{Far81}):
\begin{definition}\label{defi-crista}
A \emph{crystal group} is a discrete subgroup $\Gamma \subset {\rm Isom}(\mathbb{%
R}^{d})$ such that ${\rm Isom}(\mathbb{R}^{d})/\Gamma$ is compact, where  ${\rm Isom}(\mathbb{R}^{d})$ is endowed with the pointwise convergence topology.

Or equivalently, one can define a \emph{crystal group} to be a discrete subgroup $\Gamma \subset {\rm Isom}(\mathbb{%
R}^{d})$ such that there exists a compact {\em fundamental domain} $P$ for $\Gamma$, i.e. there exists a bounded closed set $P$ such that
\[
\bigcup_{\gamma\in\Gamma}\gamma(P)=\R^d \mbox{ and } \gamma(P^\circ)\cap\gamma'(P^\circ)\neq\emptyset \mbox{ then } \gamma=\gamma',
\]
where $P^\circ$ is the interior of $P$.
\end{definition}

Note that the set of translations on a lattice is the simplest of the crystal groups.

It is known that $d-$dimensional crystal groups are intrinsically related to regular tessellations of $\R^d$, being $\Gamma=\{\tau_k:k\in\mathcal{L}\}$, the group of translations $(\tau_k(x)=x+k)$ on a lattice $\mathcal{L}$ the simplest example. From the beginning of wavelets it is clear that such tiling property of translations play a central role. The main idea in those systems, is to move a \emph{wave} through out the space, in such a way that every point is reached. Dilations of the wave are also required to obtain reproducing systems.

When we replace the translations in a lattice by movements on a crystal group, we have many more reproducing systems available without losing the conditions of moving at each scale under the action of a group (see Definition \ref{adm} ). If one just thinks of Haar wavelets, which are systems intrinsically associated with self-affine tiles we immediately  realize the universe of new systems that arises if we change the translations by transformations in a crystal group \cite{alfcar,Ge,GrMa}.

In this sense, \emph{crystallographic wavelets}, or \emph{crystal wavelets}, and its associated \emph{crystallographic mutiresolution analysis} are a natural generalization of classical wavelets and multiresolution analysis (\cite{Mer18}, Chapter 7). In these systems, a crystal group $\Gamma$ plays the role of translations in classical wavelets.

The group condition is not essential to building reproducing systems such as wavelets, but is desirable in order to allow the use of powerful mathematical tools \cite{Tay,Bag}. Further, if we want to ensure a regular movement throughout space (\emph{discrete and uniform}, see \cite{Vi}) by the action of a group of isometries, \emph{we can not have anything different than a crystal group.} As already mentioned, the group of translations on a lattice is the simplest of the crystal groups.

Accuracy has played an important role in both approximation theory and in wavelet theory. In approximation theory, it is closely related to the approximation properties of shift invariant spaces. In wavelet theory, one of the most successful and systematic ways of constructing smooth, compactly supported, orthonormal wavelet bases for $L^2(\R)$ is based on the factorization of a symbol which determines a scaling function \cite{D}. This factorization of the symbol is related to the accuracy of the scaling function. If the scaling function has accuracy $p$, then the corresponding wavelet will have $p$ zero moments. Hence accuracy is necessary for a refinable function to be smooth, although it is not sufficient. General results of accuracy can be found in \cite{CHM1,CHM2,CHM3,Jia} and references therein.

Our goal in this paper is to obtain necessary and/or sufficient conditions  for a crystal refinable  function $\varphi$ to have crystal accuracy $p$. In this direction, our first result establishes {\em necessary} conditions on $\mathcal S(\varphi)$  with $\varphi$ an arbitrary function (not necessarily refinable), to have crystal accuracy $p$. In the case that the function $\varphi$ is $\Gamma-$refinable, we will give {\em necessary and sufficient} conditions to ensure that $\varphi$ or $S(\varphi)$ has crystal-accuracy $p$. Using the results obtained for crystal refinable functions, accuracy conditions on the coefficients of the refinement equation for a special case of functions turn out to be  much simpler than in the general case (see Theorem~\ref{simple}). Finally  in Theorem~\ref{teo-accu-approx} we establish  Strang-Fix-type conditions adapted to our case.

Let us start recalling the necessary definitions.
\subsection{Crystal Groups}

For crystal groups (see Def.~\ref{defi-crista}), we have the fundamental theorem of Bieberbach \cite{B1}, \cite{Wo} which states the following:
\begin{theorem}[Bieberbach] Let $\Gamma$ be a crystal subgroup of ${\rm Isom}(\mathbb{R}^{d})$. Then
\begin{enumerate}
\item
$\Lambda=\Gamma \cap {\rm Trans}(\R^d)$  is a finitely generated abelian group of rank $d$
which spans ${\rm Trans}(\R^d)$, and
\item the linear parts of the symmetries $ad (\Gamma)$,  the {\em point group} of $\Gamma$, is finite, and satisfies  $ad (\Gamma) \cong \Gamma/\Lambda$.
\end{enumerate}
\end{theorem}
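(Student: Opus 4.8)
The plan is to exploit the semidirect-product structure ${\rm Isom}(\R^d)=O(d)\ltimes{\rm Trans}(\R^d)$, writing each isometry as $x\mapsto Rx+t$ and letting $\pi:{\rm Isom}(\R^d)\to O(d)$ be the homomorphism recording the linear part $R$. Its kernel is exactly ${\rm Trans}(\R^d)\cong\R^d$, so $\Lambda=\Gamma\cap{\rm Trans}(\R^d)=\ker(\pi|_\Gamma)$ is a \emph{normal} subgroup of $\Gamma$ and the point group is $\pi(\Gamma)\cong\Gamma/\Lambda$. Two observations are immediate. First, since $\Gamma$ is discrete, $\Lambda$ is a discrete subgroup of $\R^d$, hence, by the structure theorem for discrete subgroups of $\R^d$, a finitely generated free abelian group of some rank $k\le d$ spanning a subspace $V={\rm span}_\R(\Lambda)$. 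Second, $V$ is invariant under the point group: for $\gamma\in\Gamma$ with linear part $R=\pi(\gamma)$ and $v\in\Lambda$, the conjugate of the translation by $v$ is the translation by $Rv$, which lies in $\Lambda$ by normality; hence $R\Lambda=\Lambda$ and $RV=V$. Thus everything reduces to two claims: that $\pi(\Gamma)$ is finite (assertion (2)), and that $k=d$ (the spanning part of (1)).

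I would prove (2) first, since (1) will use it. The goal is to show that $\pi(\Gamma)$ is a \emph{discrete} subgroup of the compact group $O(d)$; a discrete subgroup of a compact group is automatically finite, which finishes (2). The engine is the Bieberbach commutator estimate: writing $A=I+X$, $B=I+Y$, one checks $([A,B]-I)BA=[X,Y]$, so $\|[A,B]-I\|\le C\,\|A-I\|\,\|B-I\|$, i.e. taking commutators contracts the distance to the identity \emph{quadratically}. Suppose $\pi(\Gamma)$ were not discrete; then there are $\gamma_n\in\Gamma$ with $\pi(\gamma_n)\neq I$ and $\pi(\gamma_n)\to I$. Here lies the main obstacle: these $\gamma_n$ cannot themselves tend to the identity of ${\rm Isom}(\R^d)$, since $\Gamma$ is discrete, so their translation parts must stay bounded away from $0$, and one must run the commutator recursion while \emph{simultaneously} controlling the translation parts. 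Using the compact fundamental domain $P$ to normalise translation parts into a bounded set, the quadratic contraction forces a sequence of distinct elements of $\Gamma$ to accumulate, contradicting discreteness unless the subgroup they generate is purely translational. This isolates $I$ in $\pi(\Gamma)$, so $\pi(\Gamma)$ is discrete and hence finite.

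With $P:=\pi(\Gamma)$ finite, assertion (1) follows by a splitting argument. Since $P$ is a finite group of orthogonal maps preserving $V$, it also preserves $W=V^{\perp}$, giving a $P$-invariant decomposition $\R^d=V\oplus W$ with $\dim W=d-k$. Suppose $k<d$, so $\dim W\ge 1$. The orthogonal projection $p:\R^d\to W$ intertwines the $\Gamma$-action on $\R^d$ with the induced affine isometric action $\bar\gamma=(R|_W,\,p(t))$ on $W$, under which $\Lambda\subset V$ acts trivially; hence $p$ descends to a continuous surjection $\R^d/\Gamma\to W/\overline{\Gamma}$, where $\overline{\Gamma}$ is the image of $\Gamma$ in ${\rm Isom}(W)$. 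Because $\Lambda$ acts trivially, $\overline{\Gamma}$ is a quotient of $\Gamma/\Lambda\cong P$ and is therefore finite, so it cannot act cocompactly on the noncompact space $W\cong\R^{d-k}$, making $W/\overline{\Gamma}$ noncompact and contradicting the compactness of $\R^d/\Gamma$. Hence $k=d$: $\Lambda$ is a full-rank lattice, finitely generated, free abelian of rank $d$, spanning ${\rm Trans}(\R^d)$, and $\Gamma/\Lambda\cong P$ is finite. The delicate point throughout is the commutator estimate underlying (2); the remainder is bookkeeping with the semidirect-product structure and the cocompactness hypothesis.
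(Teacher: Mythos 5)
The paper itself offers no proof of this statement: it is quoted as Bieberbach's classical theorem, with pointers to \cite{B1} and \cite{lomont}. So your proposal has to be judged against the standard proofs in the literature (Bieberbach, Frobenius, Buser, Charlap), and its architecture is indeed the standard one. Two pieces of it are correct and complete: the algebraic identity $([A,B]-I)BA = AB-BA = [X,Y]$, which gives the quadratic commutator estimate in $O(d)$; and the reduction of assertion (1) to assertion (2) --- once the point group is known to be finite, it preserves $V=\mathrm{span}_{\R}(\Lambda)$ and $W=V^{\perp}$, the orthogonal projection onto $W$ induces a continuous surjection from the compact space $\R^{d}/\Gamma$ onto $W/\overline{\Gamma}$ with $\overline{\Gamma}$ finite, and a finite isometry group cannot act cocompactly on $W\cong\R^{d-k}$ unless $k=d$. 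That part is sound.

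The gap is in assertion (2), which is the actual content of Bieberbach's theorem, and your text concedes the difficulty rather than resolving it. The sentence claiming that ``the quadratic contraction forces a sequence of distinct elements of $\Gamma$ to accumulate, contradicting discreteness unless the subgroup they generate is purely translational'' is a statement of what must be proved, not a proof. Concretely: (a) to ``normalise translation parts into a bounded set'' you must multiply the $\gamma_{n}$ by elements of $\Gamma$, but multiplying by a general $\delta\in\Gamma$ replaces the rotational part $\pi(\gamma_{n})$ by $\pi(\delta)\pi(\gamma_{n})$ and destroys the property $\pi(\gamma_{n})\to I$; the only elements that preserve it are pure translations, i.e.\ elements of $\Lambda$ --- whose abundance is assertion (1), which in your scheme is deduced \emph{from} (2). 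As written this is circular. (b) Even granting bounded translation parts, the commutator recursion $\beta_{k+1}=[\gamma,\beta_{k}]$ only yields rotational parts tending to $I$ with bounded translation parts; discreteness then forces the $\beta_{k}$ to take finitely many values and eventually become pure translations, which by itself contradicts nothing. The genuinely hard step --- showing that a nontrivial rotational part close to $I$ produces infinitely many distinct elements of $\Gamma$ in a bounded region, which in the standard proofs requires using cocompactness to find orbit points far out in the rotation planes of $\pi(\gamma)$ and several auxiliary lemmas --- is entirely absent. So the proposal proves (1) conditionally on (2), and sketches (2) only up to the point where the real work of Bieberbach's theorem begins.
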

 (See also \cite{lomont},
IV-4). Here ${\rm Trans}(\R^d)$ stands for translations of $\R^d$.

We will denote the point group of $\Gamma$ by $G$.
and call $(\Gamma, G, \Lambda)$ a crystal triple.
\begin{remark} \label{2.3}
\
{\rm
\begin{itemize}
\item Note that the set $\Lambda$ is not empty by Bierberach's theorem \cite{B1} and consists of translations on the lattice $\Lambda$ which is isomorphic to $\Zeta^d$. By abuse of notation we will identify $\Lambda$ with the translations on $\Lambda$.

 We will denote by $L$ and $L^*$ the fundamental domains of the lattices $\Lambda$ and its dual, $\Lambda^*$ respectively. Here $\Lambda=R(\Zeta^d)$ with $R$ an invertible $d\times d$ matrix and hence $\mbox{$\Lambda^*=(R^*)^{-1}(\Zeta^d)$}$.
 \item The Point Group $G$ of $\Gamma$ is a finite subgroup of $\textbf{O}(d)$, the orthogonal group of $\mathbb{R}^{d}$, that preserves the lattice of translations, i.e. $G\Lambda = \Lambda$.
\end{itemize}
}
\end{remark}

General results on crystal groups, can be found for example in
\cite{gru}, \cite{Z}, \cite{martin}, \cite{B1}, \cite{B2}.

Note that
the simplest example of a crystal group is the group of
translations on a lattice $\Ll$, i.e. $\Gamma=\{\tau_k : \
k\in\Ll\}$, where $\tau_k(x)=x+k.$

One very important class of crystal groups, are the {\em splitting crystal groups}:
\begin{definition}\label{split}
$\Gamma$ is called a \emph{splitting crystal group} if it is the
semidirect product of the subgroups $\Lambda $ and $G$. In this case $\Gamma =\Lambda\rtimes G$,
 and for each
$\gamma, \widetilde{\gamma} \in \Gamma$, we have $\gamma \cdot \widetilde{\gamma}=( k+g \widetilde k, g\widetilde g)$, for
$\gamma=(k,g), \widetilde{\gamma} = (\widetilde k, \widetilde g)$ with $k, \widetilde k\ \in \Lambda$ and $g, \widetilde g \in G$ and $\gamma(x)=g(x) + k$.
\end{definition}

Every crystal group is naturally embedded in a splitting group, and very often arguments for general groups can be relatively easy reduced to the splitting case and then be proved for that simpler case.
This justifies, that from now on we will only consider splitting crystal groups.

For simplicity of notation, for each $\gamma\in\Gamma$ we will use the notation $\gamma=(k, g)$ in stead of $(\tau_k, g)$.

\begin{example}
Consider the vectors $u=(0,1)$ and
$v=(1,0)$ and let $S$ be the symmetry with respect to the
$X$-axis (i.e $S(x, y) = (x,-y)$).

Let $\Gamma$ be the group generated by $\{\tau_u,\tau_v, S\}$. Then
$\Lambda=\{\tau_{\ell} : \ell\in\Lambda\}$ where $\Lambda= \Z^2$ and $G=\{Id, S\}$. The fundamental domain $P$ is the
rectangle of vertices
$\{(0,0);(1,0);(0,1/2);(1,1/2)\}.$

%
\end{example}

\begin{definition}\label{adm}
 Let $\Gamma$ be a crystal group. We will say that $a \in \R^{d\times d}$ is a \emph{$\Gamma-$admissible matrix}, if $a$ is an expanding affine map and $a\Gamma a^{-1}\subset\Gamma.$
\end{definition}

It is easy to see that if $a$ is a $\Gamma-$admissible matrix, then $m=|\det a|$ is an integer. Therefore, the quotient group $\Gamma/a\Gamma a^{-1}$ is of order $m$.

A  function  $\varphi: \R^d \longrightarrow \C$ is \emph{$\Gamma$-refinable} with respect to $a$ and $\Gamma$ if it is a linear combination of the rescaled and `translated' functions $\varphi(\gamma^{-1}(ax))$, where the `translates' $\gamma\in\Gamma$ are movements on $\Gamma$. Precisely, $\varphi$ satisfies a \emph{refinement equation} or \emph{ dilation equation} of the form
\begin{equation}\label{ec_ref_single}
  \varphi(x)=\sum_{\gamma\in\Gamma'}d_\gamma \varphi(\gamma^{-1}(ax)),
\end{equation}
for some finite $\Gamma'\subset\Gamma$.

%
Refinable functions with respect to $a$ and $\Gamma$ are related to \emph{Crystal Wavelets} and \emph{Wavelets with composite dilations} \cite{alfcar}, \cite{GLLWW}, \cite{MQ16}.

In this paper we address the multidimensional case ($d\geq1$) with a $\Gamma-$admissible matrix $a$ for crystal-invariant spaces. We seek to determine one fundamental property of the space spanned by the $\Gamma-$refinable function $\varphi$ based on the coefficients $d_\gamma$: the property of providing good approximation in $L^2(\R^d)$. For the 1-dimensional case ($d=1$), and $\Gamma = \Z$, the approximation order is equivalent to the {\em accuracy} of the function  $\varphi$. For $d \geq 2$ unfortunately the equivalence is not true, however, accuracy is still {\em necessary} for providing good approximation (see \cite{Jia}). In section~\ref{approx-accuracy} we will elaborate on these relations for crystal-accuracy.

\begin{definition} Let $\varphi:\R^d \rightarrow \C$, the {\em crystal accuracy} of $\varphi$ is the largest integer $p$ such that {\em all} multivariate polynomials $q(x)=q(x_1, \dots, x_d)$ of ${\rm deg}(q)<p$ lie in the space that is the closure of all finite linear combinations of ${\Gamma-}$translates of the function $\varphi$,
\begin{equation}
S(\varphi)  = \overline{\rm span} \left\{\sum_{i=1}^k\ d_{\gamma_i} \ \varphi (\gamma_i(x))\; :\; d_{\gamma_i}\in\C \right\}. \label{SF}
\end{equation}
\end{definition}
As usual, equality of functions is interpreted as holding almost everywhere (a.e.). Note that in fact,  {\em accuracy} is a property of the space $\mathcal S(\varphi)$, but since the space is generated by $\Gamma-$translates of the function $\varphi$, we will talk in-distinctively about the accuracy of $\varphi$, or of $\mathcal S(\varphi)$. Just as a remark, we use this definition of $S(\varphi)$ for convenience of future calculations, but it is clear, that it also satisfies
\begin{equation*}
S(\varphi)  =   \overline{\rm span} \left\{\sum_{i=1}^k\ d_{\gamma_i} \ \varphi (\gamma_i^{-1}(x))\; :\; d_{\gamma_i}\in\C \right\}.
\end{equation*}

The results of this paper, for the most general case, of multidimensional {\em vector-valued} functions, can also be obtained in a similar way, however, the notation is even more complicated and the proofs are slightly more delicate. However the main ideas are already contained in the single function case $\varphi:\R^d\longrightarrow \C$ , and this is why we chose to present this case of a single function and in the appendix we state the general theorems without proof.


\section{Notation}

 We use the standard multi-index notation $x^\al=x_1^{\al_1}...x_d^{\al_d}$, where $x=(x_1,...,x_d)^T$ is in $\R^d$ and $\al=(\al_1,...,\al_d)$ with each $\al_i$ a nonnegative integer. The degree of $\al$ is $|\al|=\al_1+...+\al_d$. The number of multi-indices $\al$ of degree $s$ is
 $\displaystyle d_s=\left(                                                                                                                                   \begin{array}{c}
s+d-1 \\
d-1 \\
\end{array}
\right)$. We write
$\beta\leq\al$ if $\beta_i\leq\al_i$ for $i=1,...,d$.

Following the ideas in \cite{CHM1} for each integer $s\geq0$ we define the vector-valued function $X_{[s]}:\R^d\rightarrow\R^{d_s}$ by
$$X_{[s]}(x)=[x^\alpha]_{|\alpha|=s},\; x\in\R^s.$$

For our purposes we need define two special matrices, $a_{[s]}$ and $Q_{[s,t]}$ for integers $s,t\geq0$. Given a matrix $a$, we define the matrices $a_{[s]}$ and $Q_{[s,t]}$ by
\begin{eqnarray*}
X_{[s]}(ax)&=&a_{[s]}X_{[s]}(x),\\
X_{[s]}(x-y)&=&\sum_{t=0}^sQ_{[s,t]}(y)X_{[t]}(x).
\end{eqnarray*}
Note that $a_{[s]} \in \R^{d_s \times d_s}$ and $Q_{[s,t]} \in \R^{d_s \times d_t}$.

These matrices have two properties that will be of great importance.
\begin{lemma}\label{lema4}
Let $a \in \R^{d\times d}$ be a matrix, and $\Lambda$ be the lattice associated to the crystal group $\Gamma$ (see Remark~\ref{2.3}). Then:
\begin{enumerate}
  \item If $a$ is an expansive matrix then $a_{[s]}$ is an expansive matrix for each $s\geq0$.
  \item If $a$ is an invertible matrix then $Q_{[s,t]}(az)=a_{[s]}Q_{[s,t]}(z)(a^{-1})_{[t]}.$
\end{enumerate}
\end{lemma}
The proof of the previous lemma as well as the explicit form and properties of these matrices can be seen in \cite{CHM1}.

From the matrices $a_{[s]}$ and $Q_{[s,t]}$ in order to obtain \\
$X_{[s]} \left(\gamma^{-1}(x)\right)= \sum_{t=0}^s \Q_{[s,t]}(\gamma) X_{[t]}(x)$ we give the following definition.

\begin{definition}\label{q-tilde}
 Let $(\Gamma,G,\Lambda)$ be a splitting crystal triple. Let $\gamma\in\Gamma$, $\gamma=(l, g)$ then we define the matrices $\widetilde{Q}_{[s,t]}$ by
 $$\widetilde{Q}_{[s,t]}(\gamma)=(-1)^sg^{-1}_{[s]}Q_{[s,t]}(l),$$
 where  $g^{-1}_{[s]}$ is the matrix that satisfies $X_{[s]}(g^{-1}(x))=g^{-1}_{[s]}X_{[s]}(x)$. In the case that $\gamma = (l, {\rm Id})$ we will 
 write $\Q_{[s,t]} (\gamma) = \Q_{[s,t]}(l) = Q_{[s,t]}(l)$.
\end{definition}

\begin{lemma}\label{lem}
Let $(\Gamma,G,\Lambda)$ be a splitting crystal triple, and $a$ an invertible matrix such that $a\Gamma a^{-1}\subset\Gamma$. We then have:
  \begin{enumerate}
    \item $\Q_{[s,s]}(l,{\rm Id})={\rm Id}, l \in \Lambda.$
    \item $\Q_{[s,0]}(\gamma)=g^{-1}_{[s]}X_{[s]}(l)$ for each $\gamma=(l,g)\in\Gamma$.
    \item $\Q_{[s,t]}(\gamma_1\gamma_2)=\sum_{u=t}^s\Q_{[s,u]}(\gamma_2)\Q_{[u,t]}(\gamma_1)$.
    \item $\Q_{[s,t]}(a\gamma a^{-1})=a_{[s]}\Q_{[s,t]}(\gamma)a^{-1}_{[t]}$. \label{i4}
 \item Let $b_t\in\C^{d_t\times r}$ be given matrices,   for $0\leq t\leq s$. If\  $\sum_{t=0}^s\Q_{[s,t]}(al)b_t=0$ for each $l\in\Lambda$, then $b_t=0$ for $0\leq t\leq s$.  \label{5}
  \end{enumerate}
\end{lemma}
The proof the previous lemma is immediate from Lemma \ref{lema4} and Lemmas 4.1 and 4.7 of \cite{CHM1}.

Given a collection $\{v_\al \in\C :0\leq|\al|<p\},$
 we shall associate special matrices and functions, which play an important role in our analysis of accuracy.

We group the numbers $v_\al$ by degree to form column vectors $v_{[s]} \in \C^{d_s}$ , i.e.
\begin{equation}\label{v}
  v_{[s]}=[v_\al]_{|\al|=s}, \ \ 0\leq s<p.
\end{equation}
Note that, when $|\al|=0$ then $v_{[0]}=[v_0]_0=v_0$.

We define the matrices $y_{[s]}(\gamma)$ by
\begin{equation}\label{y-s}
y_{[s]}(\gamma)=\sum_{t=0}^s\Q_{[s,t]}(\gamma)v_{[t]} = g^{-1}_{[s]}\sum_{t=0}^sQ_{[s,t]}(l)v_{[t]},\end{equation}
where $\gamma=(l,g)$ and $g^{-1}_{[s]}$ is as before the matrix that satisfies $X_{[s]}(g^{-1}(x))=g^{-1}_{[s]}X_{[s]}(x)$.

Finally, we define the infinite row vector
\begin{equation} \label{Y-s}Y_{[s]}=(y_{[s]}(\gamma))_{\gamma\in\Gamma}.\end{equation}

The functions $y_{[s]}$ have the following properties.

\begin{lemma}\label{cor:4.4}
  Let $\{v_\al\in\C:0\leq|\al|<p\}$ be given and let   $y_{[s]}$ be the functions given by (\ref{y-s}). Let $\gamma_1$ and $\gamma_2$ in $\Gamma$, then
  $$y_{[s]}(\gamma_1\gamma_2)=\sum_{t=0}^s\Q_{[s,t]}(\gamma_2)y_{[t]}(\gamma_1).$$
\end{lemma}
\begin{proof}
For the proof we use Lemmas 4.1, 4.2 and 4.3 of \cite{CHM1}. By definition
\begin{eqnarray*}
y_{[s]}(\gamma _{1}\gamma _{2})&=&\sum_{t=0}^s\Q_{[s,t]}(\gamma_1\gamma_2)v_{[t]} = \sum\limits_{t=0}^{s}\sum\limits_{u=t}^{s}\Q_{[s,u]}(\gamma_{2})\Q_{[u,t]}(\gamma_1)v_{[t]}
\\
&=&\sum\limits_{u=0}^{s}\Q_{[s,u]}(\gamma_{2})\sum%
\limits_{t=0}^{u}\Q_{[u,t]}(\gamma_{1})v_{[t]}  = \sum\limits_{u=0}^{s}\Q_{[s,u]}(\gamma_{2})y_{[u]}(\gamma_{1}).
\end{eqnarray*}
\end{proof}
\begin{remark}
Note that   if $\gamma_2=(l_2, {\rm Id})=\tau_{l_2}$, then the previous equality yields
  $$y_{[s]}(\gamma_1\tau_{l_2})=\sum_{t=0}^s\Q_{[s,t]}(l_2)y_{[t]}(\gamma_1).$$
  \end{remark}

We will say that the translates of the function $\varphi$ along $\Gamma$ are \emph{$\Gamma-$independent} if for every choice of scalars $b_\gamma\in\C$,
$$\sum_{\gamma\in\Gamma}b_\gamma \varphi(\gamma x)=0 \; \mbox{ if and only if, }\; b_\gamma=0\;\mbox{ for every }\gamma.$$
Equivalently, for every choice of  an infinite row vector $\B=(b_\gamma)_{\gamma\in\Gamma}$,
$$\B\Phi(x)=0\; \mbox{ if and only if, }\; \B=0.$$

Here $\Phi(x)$ is the infinite column vector  with entries $\varphi(\gamma (x)),$ i.e.%
\begin{equation}\label{F}
\Phi(x)=\left[ \varphi(\gamma (x))\right] _{\gamma \in \Gamma }.
\end{equation}

\section{Characterization of Accuracy}

\subsection{Necessary conditions for arbitrary functions.}

In this section, we will present necessary conditions for an arbitrary (not necessarily $\Gamma-$refinable) function $f:\R^d \longrightarrow \C$ with $\Gamma-$independent translates, to have accuracy $p$.

\begin{theorem}\label{thm3}
Assume that $f:\mathbb{R}^{d}\rightarrow\mathbb{C}$ is
compactly supported, and that translates of $f$ are
$\Gamma-$independent. If $f$ has accuracy $p$ then there exists a
collection $\{v_{\alpha }\in\mathbb{C}: 0\leq
\left\vert \alpha \right\vert <p\}$ of row vectors such that
\begin{description}
\item[i)] $v_{0}\neq 0.$
\item[ii)] $X_{[s]}(x)=\sum\limits_{\gamma \in \Gamma }y_{[s]}(\gamma
)f(\gamma (x))=Y_{[s]}F(x)$ for $0\leq s<p,$ and $F(x)$ is as defined in (\ref{F})\\
where $Y_{[s]}=\left(
y_{[s]}(\gamma)\right) _{\gamma \in \Gamma } = \left(\sum_{t=0}^s\Q_{[s,t]}(\gamma)v_{[t]}\right) _{\gamma \in \Gamma }$ (as in (\ref{y-s}) and (\ref{Y-s})).
\end{description}
\end{theorem}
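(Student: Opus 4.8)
The plan is to turn the accuracy hypothesis into exact, locally finite expansions of the polynomial blocks $X_{[s]}$, to define the vectors $v_\alpha$ as the coefficients attached to the identity of $\Gamma$, and to use the transformation law of polynomials under $\Gamma$ together with $\Gamma$-independence to force every coefficient into the prescribed form $y_{[s]}(\gamma)$. First I would record that, since $f$ is compactly supported and $\Gamma$ acts properly discontinuously, only finitely many translates $f(\gamma\cdot)$ are supported on any compact set; hence any family $(b_\gamma)_{\gamma\in\Gamma}$ of row vectors defines a locally finite sum $\sum_\gamma b_\gamma f(\gamma x)$, and by $\Gamma$-independence the representation of a given function is unique. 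Accuracy $p$ gives $x^\alpha\in S(f)$ for $|\alpha|<p$; upgrading this closure membership to a genuine identity $x^\alpha=\sum_\gamma c_\alpha(\gamma)f(\gamma x)$ with $c_\alpha(\gamma)\in\C^{1\times r}$ and grouping by degree, I obtain
\[
X_{[s]}(x)=\sum_{\gamma\in\Gamma}C_{[s]}(\gamma)f(\gamma x),\qquad 0\le s<p,
\]
with $C_{[s]}(\gamma)=[c_\alpha(\gamma)]_{|\alpha|=s}$ uniquely determined. I then define $v_{[t]}:=C_{[t]}(e)$, where $e=(Id,0)$, that is $v_\alpha:=c_\alpha(e)$.

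The key computation is the behaviour of $X_{[s]}$ under $\gamma^{-1}$. Writing $\gamma=(b,l)$ so that $\gamma^{-1}(x)=b^{-1}x-l$, the defining relation $X_{[s]}(x-y)=\sum_t Q_{[s,t]}(y)X_{[t]}(x)$, the identity $X_{[t]}(b^{-1}x)=b_{[t]}^{-1}X_{[t]}(x)$ (the map $b\mapsto b_{[t]}$ being a homomorphism), and Definition \ref{q-tilde} give
\[
X_{[s]}(\gamma^{-1}x)=\sum_{t=0}^{s}Q_{[s,t]}(l)\,b_{[t]}^{-1}X_{[t]}(x)=\sum_{t=0}^{s}\widetilde{Q}_{[s,t]}(\gamma)\,X_{[t]}(x).
\]
Fix $\gamma_0\in\Gamma$. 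Substituting the expansion of each $X_{[t]}$, $t\le s$, into the right-hand side expresses $X_{[s]}(\gamma_0^{-1}x)$ as $\sum_\gamma\big(\sum_{t=0}^{s}\widetilde{Q}_{[s,t]}(\gamma_0)C_{[t]}(\gamma)\big)f(\gamma x)$. On the other hand, replacing $x$ by $\gamma_0^{-1}x$ in the expansion of $X_{[s]}$ and reindexing $\gamma\mapsto\gamma\gamma_0$ gives $X_{[s]}(\gamma_0^{-1}x)=\sum_\gamma C_{[s]}(\gamma\gamma_0)f(\gamma x)$. Uniqueness of coefficients ($\Gamma$-independence) then forces
\[
C_{[s]}(\gamma\gamma_0)=\sum_{t=0}^{s}\widetilde{Q}_{[s,t]}(\gamma_0)\,C_{[t]}(\gamma),\qquad\gamma,\gamma_0\in\Gamma.
\]
Putting $\gamma=e$ and recalling $C_{[t]}(e)=v_{[t]}$ yields $C_{[s]}(\gamma_0)=\sum_{t=0}^{s}\widetilde{Q}_{[s,t]}(\gamma_0)v_{[t]}=y_{[s]}(\gamma_0)$, which is exactly statement (ii); this recursion is precisely the cocycle relation of Lemma \ref{cor:4.4}, so $C_{[s]}$ and $y_{[s]}$ must agree everywhere once they agree at $e$.

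For (i) I would specialize to $s=0$. Here $X_{[0]}\equiv 1$ and $\widetilde{Q}_{[0,0]}(\gamma)=1$ for every $\gamma$, so $y_{[0]}(\gamma)=v_{[0]}=v_0$ identically and the expansion becomes $1=v_0\sum_{\gamma\in\Gamma}f(\gamma x)$. If $v_0=0$ the right-hand side vanishes, contradicting that it equals the constant function $1$; hence $v_0\neq0$.

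The step I expect to be the real obstacle is the first one: converting $x^\alpha\in S(f)$, which a priori only says $x^\alpha$ is a limit of finite combinations of translates, into an exact locally finite expansion with well-defined, unique coefficients. The resolution should use compact support to localize — on each compact set only finitely many translates are active, and global $\Gamma$-independence yields local independence there — so that the coefficients of an approximating sequence are forced to stabilize and assemble into a single locally finite representation. Once this is secured, the remainder is the formal algebra above, driven entirely by the transformation identity for $X_{[s]}$ and by $\Gamma$-independence.
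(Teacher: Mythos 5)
Your core argument is the paper's own proof, almost line for line: expand each $X_{[s]}$ in $\Gamma$-translates of $f$, compute $X_{[s]}(\sigma^{-1}x)$ in two ways --- once through the transformation law $X_{[s]}(\sigma^{-1}x)=\sum_{t\le s}\widetilde{Q}_{[s,t]}(\sigma)X_{[t]}(x)$, once by reindexing $\gamma\mapsto\gamma\sigma$ in the expansion --- then invoke $\Gamma$-independence to equate coefficients, and set $\gamma=e$ to identify the coefficients with $y_{[s]}$; the case $s=0$ yields $v_0\neq0$. Your $C_{[s]}(\gamma)$ and the relation $C_{[s]}(\gamma\gamma_0)=\sum_{t}\widetilde{Q}_{[s,t]}(\gamma_0)C_{[t]}(\gamma)$ are exactly the paper's $w_{[s]}(\gamma)$ and its identity $W_{[s]}(\sigma)=\sum_{t}\widetilde{Q}_{[s,t]}(\sigma)W_{[t]}(I)$; that part is correct.

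The one place you go beyond the paper is the preliminary step you yourself flag as the obstacle, and there your sketch rests on a false claim: global $\Gamma$-independence does \emph{not} imply local independence of the finitely many translates active on a compact set. Counterexample already for $\Gamma=\mathbb{Z}$ acting by integer translations: let $\phi$ be supported on $[0,3]$ with $\phi|_{[0,1]}$ and $\phi|_{[2,3]}$ equal to the same function $g$ (after shifting) and $\phi|_{[1,2]}$ a shift of some $h$ linearly independent of $g$. Then $\phi(x)-\phi(x+2)\equiv0$ on $(0,1)$, so the active translates on $[0,1]$ are locally dependent; yet $\sum_n c_n\phi(x-n)=0$ forces, interval by interval, $(c_k+c_{k-2})g+c_{k-1}h=0$, hence $c_{k-1}=0$ for every $k$, so the translates are globally independent. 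Consequently ``the coefficients of an approximating sequence stabilize'' is unjustified as stated. There are two ways to close this. First, the reading the paper tacitly uses (and which matches its abstract and \cite{CHM1}): $S(f)$ consists of all locally finite, possibly infinite, combinations $\sum_\gamma c_\gamma f(\gamma x)$, with no closure taken --- note that a nonzero polynomial can never equal a \emph{finite} combination of compactly supported functions --- so accuracy hands you the exact expansion by definition. Second, if one insists on the closure definition: on each compact $K$ only finitely many translates are active and the span of their restrictions to $K$ is finite-dimensional, hence closed, so $x^\alpha$ agrees on $K$ with some finite combination; the admissible coefficient families over an exhaustion $K_1\subset K_2\subset\cdots$ form an inverse system of nonempty affine subspaces of finite-dimensional spaces, whose images stabilize by dimension count, so a compatible global family exists. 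The uniqueness you need afterwards follows from global $\Gamma$-independence alone, which is the hypothesis. With that repair (or with the intended definition), your proof is complete and coincides with the paper's.
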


\begin{proof}
Since $f$ has accuracy $p$, there exist coefficients $w_{\alpha
,\gamma }\in \mathbb{C}$ such that every polynomial $x^{\alpha }$ of degree $\alpha,$ $%
0\leq \left\vert \alpha \right\vert <p$ can be written as a finite linear combination of $\Gamma-$translates of $f$,
$$
x^{\alpha }=\sum\limits_{\gamma \in \Gamma }w_{\alpha ,\gamma
}f(\gamma(x)) \mbox{ a.e.}$$
For each $\gamma \in \Gamma $, group the $w_{\alpha
,\gamma }$ by degree to form the column vectors
$$ w_{[s]}(\gamma
)=[w_{\alpha ,\gamma }]_{\left\vert \alpha \right\vert =s}.
$$
For each $\sigma \in \Gamma $ define the infinite row vector
$$W_{[s]}(\sigma )=\left( w_{[s]}(\gamma \sigma )\right) _{\gamma
\in \Gamma }.$$
 Next, let $v_{\alpha }=w_{\alpha,I}$
(where $I={\rm Id}$ is the identity of $\Gamma $) and recall the definitions of the vectors
$v_{[s]}$ and the matrices $y_{[s]}$ from (\ref{v}) and (\ref{y-s}).
Then we have
for $0\leq s<p,$ that
\begin{eqnarray*}
X_{[s]}(x) &=&[x^{\alpha }]_{\left\vert \alpha \right\vert =s} =\left[ \sum\limits_{\gamma \in \Gamma }w_{\alpha ,\gamma }f(\gamma(x))%
\right] _{\left\vert \alpha \right\vert =s} \\
&=&\sum\limits_{\gamma \in \Gamma }w_{[s]}(\gamma )f(\gamma(x))  = W_{[s]}(I)F(x).
\end{eqnarray*}
Now for each $\sigma =(\ell,g)\in \Gamma $ with $\sigma^{-1}=(-g^{-1}\ell, g^{-1})$
\begin{eqnarray*}
W_{[s]}(\sigma )F(x) &=& X_{[s]}(\sigma^{-1}(x)) = X_{[s]}(g^{-1}(x -\ell)) = \sum_{t=0}^s\Q_{[s,t]}(\sigma)X_{[t]}(x)\\
&=&\left(
\sum\limits_{t=0}^{s}\Q_{[s,t]}(\sigma)W_{[t]}(I)\right)
F(x).
\end{eqnarray*}
Taking into account our assumption that translates of $f$ are $\Gamma-$independent,
this implies that $W_{[s]}(\sigma
)=\sum\limits_{t=0}^{s}\Q_{[s,t]}(\sigma)W_{[t]}(I),
$
and therefore for each $\gamma \in \Gamma,
w_{[s]}(\gamma  \sigma ) =
\sum\limits_{t=0}^{s}\Q_{[s,t]}(\sigma)w_{[t]}(\gamma )$.
In particular, for $\gamma =I$ we obtain $w_{[s]}(\sigma)=y_{[s]}(\sigma )$.

Thus
$$ X_{[s]}(x)=\sum\limits_{\gamma \in \Gamma }y_{[s]}(\gamma
)f(\gamma (x))=Y_{[s]}F(x).$$

For $s=0$, since $y_{[0]}(\gamma )=v_{0}$ for
every $\gamma \in \Gamma $ we have
$$
1=x^{0}=X_{[0]}(x)=\sum\limits_{\gamma \in \Gamma }y_{[0]}(\gamma
)f(\gamma (x))=v_{0}\sum\limits_{\gamma \in \Gamma }f(\gamma (x)),
$$
and hence $v_{0}\neq 0.$
\end{proof}

\subsection{Accuracy for $\Gamma-$refinable functions.}

In this section we will obtain necessary and/or sufficient conditions for a $\Gamma-$refinable function to have accuracy $p$.

First, we rewrite the refinement equation  (\ref{ec_ref_single}) in matrix form.

Let $(\Gamma,G,\Lambda)$ be a splitting crystal triple and $a \in \R^{d\times d}$ a $\Gamma-$admissible matrix. Remember that a function $f:\R^d \longrightarrow \C$ is $\Gamma-$refinable if it satisfies
$$ f(x)=\sum_{\gamma\in\Gamma}d_\gamma f(\gamma^{-1}(ax)), \ \mbox{with} \ d_{\gamma} \in \C.  $$

We consider as before  (\ref{F}), $F(x)$ to be the infinite column vector $F(x)=[f(\gamma(x))]_{\gamma\in\Gamma}$.
Note that if $f$ has compact support, for a given $x$, only finitely many entries $f(\gamma(x))$ of $F(x)$ are non zero.

\begin{lemma}
  Let $f:\R^d\rightarrow\C$, $a \in \R^{d\times d}$ a $\Gamma-$admissible matrix and $F$ the function defined by $F(x)=\left[ f(\gamma  (x))\right] _{\gamma \in \Gamma }$ (see  (\ref{F})). Then, the function $f$ is $\Gamma-$refinable if and only if $LF(ax)=F(x)$ a.e., where $L$ is the $\Gamma \times \Gamma$ matrix given by $L=\left[ d_{a\gamma a^{-1}\sigma^{-1} }\right]
_{\gamma ,\sigma \in \Gamma }$,  where $d_\gamma$ are the coefficients of the refinement equation.
\end{lemma}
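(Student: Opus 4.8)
The plan is to prove the lemma by a direct reindexing of the refinement equation, reading off its $\gamma$-th block row as precisely the content of the matrix identity $LF(Ax)=F(x)$. First I would record what the claimed identity says blockwise. Since $F(x)=[f(\gamma(x))]_{\gamma\in\Gamma}$ and $L=[d_{A\gamma A^{-1}\sigma^{-1}}]_{\gamma,\sigma\in\Gamma}$, the $\gamma$-th block of $LF(Ax)$ is the sum
$$(LF(Ax))_\gamma=\sum_{\sigma\in\Gamma}d_{A\gamma A^{-1}\sigma^{-1}}\,f(\sigma(Ax)),$$
which is finite for each fixed $x$ because $f$ has compact support and only finitely many $d_\gamma$ are nonzero. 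Thus the matrix equation $LF(Ax)=F(x)$ is equivalent to the system
$$f(\gamma(x))=\sum_{\sigma\in\Gamma}d_{A\gamma A^{-1}\sigma^{-1}}\,f(\sigma(Ax)),\qquad\gamma\in\Gamma,$$
holding a.e. It therefore suffices to show that each equation in this system is exactly the refinement equation evaluated at the point $\gamma(x)$.

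The core step is the change of summation variable. Starting from $f(y)=\sum_{\eta\in\Gamma}d_\eta f(\eta^{-1}(Ay))$ and substituting $y=\gamma(x)$, I would use composition of affine maps to rewrite $\eta^{-1}(A\gamma(x))=(\eta^{-1}\circ A\circ\gamma)(x)$. Inserting $A^{-1}A$ gives $\eta^{-1}\circ A\circ\gamma=\bigl(\eta^{-1}(A\gamma A^{-1})\bigr)\circ A$, and since $A$ is $\Gamma$-admissible we have $A\gamma A^{-1}\in\Gamma$, so $\sigma:=\eta^{-1}(A\gamma A^{-1})$ lies in $\Gamma$ and $f(\eta^{-1}(A\gamma(x)))=f(\sigma(Ax))$. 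Solving for $\eta$ using the group law of the splitting triple gives $\eta=A\gamma A^{-1}\sigma^{-1}$, hence $d_\eta=d_{A\gamma A^{-1}\sigma^{-1}}$, matching the $(\gamma,\sigma)$ block of $L$. As $\eta$ ranges over $\Gamma$ with $\gamma$ fixed, the map $\eta\mapsto\sigma$ is a bijection of $\Gamma$ (it is inversion followed by right translation by the fixed element $A\gamma A^{-1}$), so reindexing the sum by $\sigma$ turns the refinement equation at $\gamma(x)$ into exactly the $\gamma$-th block row displayed above.

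Finally I would close the equivalence in both directions. For the forward implication, if $f$ is $\Gamma$-refinable then the refinement equation holds for a.e.\ argument; replacing the free variable by $\gamma(x)$ preserves the a.e.\ statement because $\gamma$ is an invertible isometry and hence measure preserving, so the $\gamma$-th block row holds for every $\gamma$, that is $LF(Ax)=F(x)$ a.e. For the converse, the block row indexed by $\gamma=I$ is literally the refinement equation, so $LF(Ax)=F(x)$ forces $f$ to be $\Gamma$-refinable. I expect the only genuine difficulty to be bookkeeping: composing the linear dilation $A$ with the affine isometries of $\Gamma$ in the correct order, verifying that $\eta\mapsto\sigma$ is a bijection so the reindexing is legitimate, and noting that each block row is a finite sum so that $LF(Ax)$ is well defined even though $L$ and $F$ are formally infinite.
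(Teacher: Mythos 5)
Your proof is correct and follows essentially the same route as the paper: both identify the $\gamma$-th block row of $LF(Ax)=F(x)$ with the refinement equation evaluated at $\gamma(x)$ via the change of variable $\eta=A\gamma A^{-1}\sigma^{-1}$ (the paper calls it $\alpha$), and both obtain the converse from the block row at $\gamma=I$ after the reindexing $\sigma\mapsto\sigma^{-1}$. Your explicit remarks that $\eta\mapsto\sigma$ is a bijection of $\Gamma$ and that the a.e.\ statement survives the substitution $y=\gamma(x)$ are points the paper leaves implicit, but the argument is the same.
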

The proof of this result, is a consequence of the definition of the function $F$ and the matrix $L$.

The following result characterizes the accuracy of $\Gamma-$refinable functions.

\begin{theorem}\label{teo-3}
Assume that $f:\mathbb{R}^{d}\rightarrow\mathbb{C}$ is integrable, compactly supported and satisfies
the refinement equation (\ref{ec_ref_single}).  Consider the following statements

\begin{itemize}
\item[I)] $f$ has accuracy $p$.

\item[II)] There exist a collection of complex numbers $\{v_{\alpha }\in\mathbb{C} : 0\leq \left\vert \alpha \right\vert <p\}$
such that

\begin{itemize}
\item[(i)] $v_{0}\hat{f}(0)\neq 0$ and

\item[(ii)] $Y_{[s]}=a_{[s]}Y_{[s]}L$ for $0\leq s<p$
where $Y_{[s]}=(y_{[s]}(\gamma ))_{\gamma \in \Gamma }$ as in (\ref{y-s}) and (\ref{Y-s}).
\end{itemize}
\end{itemize}

Then we have the following:

\begin{itemize}
\item[a)] If the translates of $f$ along $\Gamma$ are independent, then \emph{(I)}
implies \emph{(II)}.

\item[b)] \emph{(II)} implies \emph{(I)}. In this case, if we
scale all the vectors $v_{\alpha }$ by
$C=(v_{0}\hat{f}(0))^{-1}\left\vert P\right\vert $ then
$$
X_{[s]}(x)=\sum\limits_{\gamma \in \Gamma }y_{[s]}(\gamma
)f(\gamma (x))=Y_{[s]}F(x)\mbox{, }0\leq s<p.
$$
\end{itemize}
\end{theorem}

\begin{proof}
\

\begin{itemize}
\item[a)] Since $f$ has accuracy $p$ and translates of $f$ along
$\Gamma$ are independent, by \emph{Theorem \ref{thm3}} there exists a collection of coefficients $\{v_{\alpha }\in\mathbb{C}: 0\leq \left\vert \alpha \right\vert <p\}$ such that
$$
X_{[s]}(x)=Y_{[s]}F(x)\mbox{ }0\leq s<p,
$$
with $y_{[s]}$ and $Y_{[s]}$ given by (\ref{y-s}) and (\ref{Y-s}) respectively, and $v_0\neq0$.

Further, if $P$ is a fundamental domain of $\Gamma $ then
$$
v_{0}\hat{f}(0)=v_{0}\int\limits_{\mathbb{R}^{d}}f(x)dx=v_{0}\sum\limits_{\gamma
\in \Gamma }\int\limits_{P}f(\gamma
(x))dx=\int\limits_{P}1dx=\left\vert P\right\vert \neq 0,
$$
which proves (i).

To prove (ii), using the refinement equation $F(x)=LF(ax)$ and the definition of
$a_{[s]}$ we see that
$$
Y_{[s]}F(ax) =X_{[s]}(ax) =a_{[s]}X_{[s]}(x) =a_{[s]}Y_{[s]}F(x) =a_{[s]}Y_{[s]}LF(ax),
$$%
and since $f$ has independent $\Gamma-$translates, this implies that
$Y_{[s]}=a_{[s]}Y_{[s]}L$ for $0\leq s<p$ which completes the proof of a).

\item[b)] For each $0\leq s<p$, define the vector-valued function
$G_{[s]}:\mathbb{R}^{d}\rightarrow\mathbb{C}^{d_s},$
by $$G_{[s]}(x)=\sum\limits_{\gamma \in \Gamma }y_{[s]}(\gamma
)f(\gamma (x))=Y_{[s]}F(x).$$

Note that for each fixed $x$, only finitely many terms in the
sum defining $G_{[s]}(x)$ are nonzero.

Using the equation $Y_{[s]}=a_{[s]}Y_{[s]}L$ and the refinement equation $LF(ax)=F(x)$,
we have
\begin{equation}
G_{[s]}(ax) =Y_{[s]}F(ax)  = a_{[s]}Y_{[s]}LF(ax)  = a_{[s]}Y_{[s]}F(x) =a_{[s]}G_{[s]}(x).  \label{4.1}
\end{equation}
Since $X_{[s]}(ax)=a_{[s]}X_{[s]}(x),$ we see that $G_{[s]}(x)$ and
$X_{[s]}(x)$ behave identically under dilation by $a$. We will show
that  if we take $C=(v_{0}\hat{f} (0))\left\vert
P\right\vert ^{-1}$, then $
G_{[s]}(x)=CX_{[s]}(x)$ for $0\leq s<p$. So $G_{[s]}$ coincides with $X_{[s]}, 0 \leq s < p$ - up to a constant that does not depend on $s$.

The quotient $\R^d/\Lambda$ is a compact abelian group, equipped with the normalized Haar measure. Let $\Pi:\R^n\rightarrow\R^n/\Lambda$, be the canonical projection onto the quotient.

The map $\boldsymbol{\tau}:=\Pi a\Pi^{-1}:\R^d/\Lambda\rightarrow\R^d/\Lambda$ is a well defined, measure preserving, continuous and surjective endomorphism of the group $\R^d/\Lambda$.

The group of the characters of $\R^d/\Lambda$ is given by
$$(\R^d/\Lambda)^\wedge =\{\gamma_\lambda:\R^d/\Lambda\rightarrow S; \ \gamma_{\lambda}(x)=e^{2\pi i\langle x,\lambda\rangle}, \mbox{ with }\lambda\in\Lambda^*\}.$$
If $\gamma_\lambda\circ\boldsymbol{\tau}^n=\gamma_\lambda$ for some $n\in\mathbb{N}$, then $e^{2\pi i\langle \tau^n x,\lambda\rangle}=e^{2\pi i\langle x,\lambda\rangle}$ for all $x\in\R^d/\Lambda$ or equivalently $e^{2\pi i\langle x,(a^n)^t\lambda\rangle}=e^{2\pi i\langle x,\lambda\rangle}$ for all $x\in\R^d/\Lambda$. Therefore $(a^n)^t\lambda=\lambda$ and since $a^n$ is expansive, $\lambda=0$. Hence $\gamma_\lambda\circ\boldsymbol{\tau}^n=\gamma_\lambda$ if and only if $\gamma_\lambda=1$. Therefore, by Theorem 1.10 of \cite{Wal82}, the map $\boldsymbol{\tau}$ is ergodic.

We now proceed by induction on $s$ to show that $G_{[s]}(x)=CX_{[s]}(x)$ for $
0\leq s<p$ with $C$ independent of $s$.

For $s=0$ $G_{[0]}(x)$ is scalar-valued.
Since $a_{[0]}$ is the constant $1$, Eq.~(\ref{4.1}) states that
$G_{[0]}(ax)=G_{[0]}(x)$. Further, $y_{[0]}(\gamma)=v_0$ for every
$\gamma\in\Gamma$, so $G_{[0]}(x)=\sum\limits_{\gamma
\in\Gamma}v_0f(\gamma (x))$. Therefore, for each $\ell\in \Lambda$ we have
$$G_{[0]}(x-\ell)=\sum\limits_{\gamma \in\Gamma}v_0f(\gamma
(x-\ell))=\sum\limits_{\gamma \in\Gamma}v_0f(\gamma
\tau_{-\ell}(x))=\sum\limits_{\gamma \in\Gamma}v_0f(\gamma (x)).$$
Thus $G_{[0]}(x)$ satisfies
$$G_{[0]}(ax)=G_{[0]}(x) \mbox{ and }G_{[0]}(x-\ell)=G_{[0]}(x)\mbox{ for all } \ell\in \Lambda.$$
Hence $G_{[0]}(\tau(x))=G_{[0]}(x)$ for each $x\in \R^d/\Lambda$. Since $\boldsymbol{\tau}$
is ergodic, it follows that $G_{[0]}$ is constant a.e on $L$, where $L$ is the fundamental domain of $\Lambda$ (Theorem 1.6 of \cite{Wal82}). By periodicity, we therefore have $G_{[0]}(x)=C$ a.e.
on $\mathbb{R}^d$. Explicitly,
$$C|P| = \int_PG_{[0]}(x)dx = v_0\sum\limits_{\gamma\in\Gamma}\int_P f(\gamma(x)) = v_0\int_{\mathbb{R}^d}f(x)dx = v_0\hat{f}(0)\neq0.$$
In particular $C=\left(v_0(\widehat{f})(0)\right)|P|^{-1}\neq0$. Suppose
now, inductively, that $G_{[t]}(x)=CX_{[t]}(x)$ a.e.~for $0\leq t<s$.
Then we have
\begin{eqnarray*}
G_{[s]}(x-\ell)&=&Y_{[s]}F(x-\ell) =\sum\limits_{\gamma\in\Gamma}y_{[s]}(\gamma)f(\gamma\tau_{-\ell}(x)) =\sum\limits_{\sigma\in\Gamma}y_{[s]}(\sigma\tau_{\ell})f(\sigma(x))\\
&=&\sum\limits_{t=0}^sQ_{[s,t]}(\ell)\sum\limits_{\sigma\in\Gamma}y_{[t]}(\sigma)f(\sigma(x)) \mbox{ by Lemma \ref{cor:4.4}}.
\end{eqnarray*}
This yields
\begin{eqnarray*}
G_{[s]}(x-\ell)&=&
\sum\limits_{t=0}^sQ_{[s,t]}(\ell)Y_{[t]}F(x) = \sum\limits_{t=0}^sQ_{[s,t]}(\ell)G_{[t]}\\
&=&Q_{[s,s]}(\ell)G_{[s]}(x)+\sum\limits_{t=0}^{s-1}Q_{[s,t]}(\ell)G_{[t]}(x).
\end{eqnarray*}
Using the inductive hypothesis, we have
\begin{eqnarray*}
G_{[s]}(x-\ell)&=& Q_{[s,s]}(\ell)G_{[s]}(x)+C\sum\limits_{t=0}^{s}Q_{[s,t]}(\ell)X_{[t]}(x)-CQ_{[s,s]}(\ell)X_{[s]}(x)\\
&=&G_{[s]}(x)+CX_{[s]}(x-\ell)-CX_{[s]}(x) \mbox{ by definiton of
}Q_{[s,t]}.
\end{eqnarray*}

Therefore, if we define $H_{[s]}(x)=G_{[s]}(x)-CX_{[s]}(x)$ then
$$H_{[s]}(ax)=a_{[s]}H_{[s]}(x) \mbox{ and } H_{[s]}(x-\ell)=H_{[s]}(x),
\mbox{ for }\ell\in \Lambda.$$
This implies that
$$H_{[s]}(\boldsymbol{\tau}(x))=a_{[s]}H_{[s]}(x).$$
Let now $E\subset L$ be a set of positive measure on which $H_{[s]}$ is
bounded, say $\|H_{[s]}(x)\|\leq M$ for $x\in E$, where $\|\cdot\|$
is any fixed norm on $\mathbb{C}^{d_s}$. Since $\boldsymbol{\tau}$ is ergodic, by  Birkhoff's Ergodic Theorem (see \cite{birk}) for almost every $x\in L$,
\begin{equation}\label{4}
\lim_{n \rightarrow\infty}\frac{\#\{0<k\leq n : \boldsymbol{\tau}^k(x)\in
E\}}{n}=|E|>0.
\end{equation}
Let $x\in {L}$ be such that (\ref{4}) holds. Then there exists an increasing sequence $\{n_j\}_{j=1}^\infty$ of positive integers such that $\tau^{n_j}(x)\in E$ for each $j$.  Hence
$$M\geq\|H_{[s]}(\tau^{n_j}(x))\|=\|a_{[s]}^{n_j}H_{[s]}(x)\|.$$
But since $a_{[s]}$ is expansive $\|a_{[s]}^{n_j}H_{[s]}(x)\| \longrightarrow \infty$  if $H_{[s]}(x)\neq 0$. Therefore we must have $H_{[s]}(x)=0$ a.e. on $\mathfrak{L}$. Since $H_{[s]}$ is $\Lambda$-periodic, it must therefore vanish a.e. on $\mathbb{R}^d$. Hence $G_{[s]}=CX_{[s]}$ a.e., which completes the proof.
\end{itemize}
\end{proof}

Since the conditions for accuracy given in the previous theorem are rather difficult to check, we follow \cite{CHM1}  to give several equivalent formulations for condition \emph{(ii)} in statement \emph{(II)}.

\begin{theorem}\label{thm} Assume that $f:\mathbb{R}^{d}\rightarrow\mathbb{C}$ is integrable, compactly supported and satisfies
the refinement equation (\ref{ec_ref_single}).
Let $m=\left\vert \det a\right\vert ,$ and let $\gamma_{1}, \dots,\gamma _{m}$
be a full set of digits of the left cosets of $\Gamma $. Here,  the left cosets $\Gamma_i$ are $\Gamma _{i}=\gamma _{i}a\Gamma a^{-1}.$

Given a collection $\{v_{\alpha }\in\mathbb{C}: 0\leq \left\vert \alpha \right\vert <p\}$,
$y_{[s]}(\gamma)=\sum\limits_{t=0}^{s}\Q_{[s,t]}(\gamma)v_{[t]}$ and  $Y_{[s]}=(y_{[s]}(\gamma ))_{\gamma \in \Gamma }.$

If $v_{0}\neq 0,$ then the following statements are equivalent:

\begin{itemize}
\item[a)] $Y_{[p-1]}=a_{[p-1]}Y_{[p-1]}L$.
Equivalently, \\$%
y_{[p-1]}(\sigma )=a_{[p-1]}\sum\limits_{\gamma \in \Gamma
}y_{[p-1]}(\gamma )d_{a\gamma a^{-1}\sigma ^{-1}}$ for $\sigma
\in \Gamma. $

\item[b)] $Y_{[s]}=a_{[s]}Y_{[s]}L$ for $0\leq s<p$. Equivalently, \\
$%
y_{[s]}(\sigma )=a_{[s]}\sum\limits_{\gamma \in \Gamma
}y_{[s]}(\gamma )d_{a\gamma a^{-1}\sigma ^{-1}}$ for $\sigma
\in \Gamma. $

\item[c)] $y_{[s]}(\gamma _{i})=a_{[s]}\sum\limits_{\gamma \in \Gamma
}y_{[s]}(\gamma )d_{a\gamma a^{-1}\gamma _{i}^{-1}}$ for $0\leq s<p$ and $i=1, \dots, m.$

\item[d)] $v_{[s]}=\sum\limits_{\gamma \in \Gamma
_{i}}\sum\limits_{t=0}^{s}\Q_{[s,t]}(\gamma^{-1})a_{[t]}v_{[t]}d_{\gamma
^{-1}}$ for $0\leq s<p$ and $i=1, \dots, m.$
\end{itemize}
\end{theorem}
Note that by this theorem, if one wants to check for accuracy $p$, one does not need to check {\bf all} conditions $0\leq s < p$, but it is enough to check it for $s=p-1$.

\begin{proof}

$b)\Rightarrow a)$ and $b)\Rightarrow c)$ are trivial. So we will prove $a)\Rightarrow b)$, $c)\Rightarrow b)$ and $c)\Leftrightarrow d)$.

$a)\Rightarrow b)$\\
Assume that (a) holds, we consider for $j\in \Lambda,$ and $\sigma\in\Gamma$
\begin{equation}
\sum\limits_{s=0}^{p-1}\Q_{[p-1,s]}(aj)\left(
a_{[s]}\sum\limits_{\gamma \in \Gamma }y_{[s]}(\gamma )d_{a\gamma
a^{-1}\sigma ^{-1}}\right).   \label{a}
\end{equation}
Then by Lemmas~\ref{lema4} and \ref{cor:4.4} we have that
\begin{eqnarray*}
(\ref{a})
&=&\sum\limits_{s=0}^{p-1}a_{[p-1]}\Q_{[p-1,s]}(j)a_{[s]}^{-1}a_{[s]}\sum%
\limits_{\gamma \in \Gamma }y_{[s]}(\gamma )d_{a\gamma
a^{-1}\sigma ^{-1}} \\
&=&a_{[p-1]}\sum\limits_{\gamma \in \Gamma }y_{[p-1]}(\gamma \tau
_{j})d_{a\gamma a^{-1}\sigma ^{-1}}\\
& &\mbox{ changing variables $\gamma' = \gamma \tau_j$ and noting that $a\gamma'\tau_j^{-1}a^{-1}\sigma^{-1} = a\gamma'a^{-1} (\sigma \tau_{aj})^{-1}$}\\
&=&y_{[p-1]}(\sigma \tau _{aj})  = \sum\limits_{s=0}^{p-1}\Q_{[p-1,s]}(aj)y_{[s]}(\sigma ).
\end{eqnarray*}

Then by Lemma \ref{lem} item \ref{5} we have that
\[
a_{[s]}\sum\limits_{\gamma \in \Gamma }y_{[s]}(\gamma )d_{a\gamma
a^{-1}\sigma ^{-1}}=y_{[s]}(\sigma ),
\]
for $0\leq s<p$ and $\sigma\in\Gamma$, so statement (b) holds.

$c)\Rightarrow b)$ \\
By hypothesis $y_{[s]}(\gamma _{i})=a_{[s]}\sum\limits_{\gamma \in
\Gamma }y_{[s]}(\gamma )d_{a\gamma a^{-1}\gamma _{i}^{-1}}$ for
$0\leq s<p$, $i=1, \dots,m$ and each digit $\gamma _{i}=(l_{i}, b_{i}).$\\

Let $\sigma \in \Gamma$ then there exists unique $i=1, \dots, m$ and
$\lambda \in \Gamma $, such that $\sigma =\gamma_{i}a\lambda a^{-1}.$ Then $y_{[s]}(\sigma )=y_{[s]}(\gamma
_{i}a\lambda a^{-1})$ and by hypothesis and Lemmas \ref{cor:4.4} and \ref{lem} item \ref{i4}, we have that
\begin{eqnarray*}
y_{[s]}(\gamma _{i}a\lambda a^{-1})&= &\sum\limits_{u=0}^{s}a_{[s]}\Q_{[s,u]}(\lambda)a_{[u]}^{-1}y_{[u]}(\gamma _{i}) \\
&=&\sum\limits_{u=0}^{s}a_{[s]}\Q_{[s,u]}(\lambda)a_{[u]}^{-1}a_{[u]}%
\sum\limits_{\gamma \in \Gamma }y_{[u]}(\gamma )d_{a\gamma
a^{-1}\gamma_{i}^{-1}}  \nonumber \\
&=&a_{[s]}\sum\limits_{\gamma \in \Gamma }\left(
\sum\limits_{u=0}^{s}\Q_{[s,u]}(\lambda)y_{[u]}(\gamma
)\right) d_{a\gamma a^{-1}\gamma _{i}^{-1}}\\
&=&a_{[s]}\sum\limits_{\gamma' \in \Gamma }y_{[s]}(\gamma'
)d_{a\gamma' a^{-1}\sigma ^{-1}} \mbox{ where we again, set }\gamma'=\gamma\lambda.
\end{eqnarray*}
In the last equality we used Lemma~\ref{cor:4.4} that
$\sum\limits_{u=0}^{s}\Q_{[s,u]}(\lambda)y_{[u]}(\gamma
)=y_{[s]}(\gamma \lambda ).$

Therefore
\[
y_{[s]}(\sigma )=a_{[s]}\sum\limits_{\gamma \in \Gamma
}y_{[s]}(\gamma )d_{a\gamma a^{-1}\sigma ^{-1}}.
\]

$c)\Rightarrow d).$ \\
Assume that (c) holds, i.e. $y_{[s]}(\gamma_i)=a_{[s]}\sum_{\gamma\in\Gamma}y_{[s]}(\gamma)d_{a\gamma a^{-1}\gamma_i^{-1}},$ for $0\leq s<p$ and $i=1, \dots, m$. Then
\begin{eqnarray*}
v_{[s]} &=&y_{[s]}(Id)=y_{[s]}(\gamma _{i}\gamma
_{i}^{-1})=\sum\limits_{t=0}^{s}\Q_{[s,t]}(\gamma^{-1}_{i})y_{[t]}(\gamma _{i})  \nonumber \\
&=&\sum\limits_{t=0}^{s}\Q_{[s,t]}(\gamma^{-1}_{i})a_{[t]}\sum%
\limits_{\gamma \in \Gamma }y_{[t]}(\gamma )d_{a\gamma
a^{-1}\gamma
_{i}^{-1}} \\
&=&\sum\limits_{\gamma \in \Gamma
}\sum\limits_{t=0}^{s}\sum%
\limits_{u=0}^{t}\Q_{[s,t]}(\gamma^{-1}_{i})\Q_{[t,u]}(a\gamma a^{-1})a_{[u]}v_{[u]}d_{a\gamma
a^{-1}\gamma _{i}^{-1}}
\nonumber \\
&=&\sum\limits_{\gamma \in \Gamma
}\sum\limits_{u=0}^{s}\sum%
\limits_{t=u}^{s}\Q_{[s,t]}(\gamma^{-1}_{i})\Q_{[t,u]}(a\gamma a^{-1})a_{[u]}v_{[u]}d_{a\gamma
a^{-1}\gamma _{i}^{-1}} \\
&=&\sum\limits_{\sigma \in \Gamma
_{i}}\sum\limits_{u=0}^{s}\Q_{[s,u]}(\sigma^{-1})a_{[u]}v_{[u]}d_{\sigma^{-1}},
\end{eqnarray*}
where the last equality is obtained taking $\sigma =\gamma _{i}a\gamma ^{-1}a^{1}$ and therefore $\sigma\in \Gamma _{i}$.

$d)\Rightarrow c).$\\
 Assume now that (d) holds. Then
\begin{eqnarray*}
y_{[s]}(\gamma _{i})
&=&\sum\limits_{t=0}^{s}\Q_{[s,t]}(\gamma_{i})v_{[t]} \\
&=&\sum\limits_{\gamma \in \Gamma
}\sum\limits_{u=0}^{s}\sum%
\limits_{t=u}^{s}\Q_{[s,t]}(\gamma_{i})\Q_{[t,u]}(a\gamma a^{-1}\gamma_i^{-1})a_{[u]}v_{[u]}d_{a\gamma a^{-1}\gamma _{i}^{-1}}\\
&=&\sum\limits_{\gamma \in \Gamma
}\sum%
\limits_{u=0}^{s}\Q_{[s,u]}(a\gamma  a^{-1})a_{[u]}v_{[u]}d_{a\gamma a^{-1}\gamma _{i}^{-1}}\\
&=&\sum\limits_{\gamma \in \Gamma
}a_{[s]}\left(\sum\limits_{u=0}^{s}\Q_{[s,u]}(\gamma)v_{[u]}\right)d_{a\gamma a^{-1}\gamma _{i}^{-1}}\\
&=&a_{[s]}\sum\limits_{\gamma \in \Gamma }y_{[s]}(\gamma)d_{a\gamma a^{-1}\gamma _{i}^{-1}}.
\end{eqnarray*}
\end{proof}

As in the translation case  the last theorem enables us to obtain a much nicer accuracy condition for $f$.
\begin{theorem}\label{teoaccu}
Let $(\Gamma,G,\Lambda)$ be a splitting crystal triple, $a\in \R^{d\times d}$ a $\Gamma-$admissible matrix, $m=|\det(a)|$ and let $\Lambda_1,...,\Lambda_{m}$ be the (left) cosets of $\Lambda/a\Lambda$. Let $f:\R^d\rightarrow\C$ be a $\Gamma-$refinable function. If the coefficients $d_\gamma$ of the refinement equation (\ref{ec_ref_single}) satisfy:
\begin{enumerate}
\item[i)] $\sum_{\gamma\in\Gamma} d_{\gamma^{-1}}=m$,
\item[ii)] For each $g\in G$ and $|\alpha|<p$
\begin{equation}\label{condicion}
\sum_{\ell\in\Lambda_1}(-g(\ell))^{\alpha}d_{(\ell,g)^{-1}}=\dots=\sum_{\ell\in\Lambda_m}(-g(\ell))^{\alpha}d_{(\ell,g)^{-1}}=\beta_{(\alpha, g)},
\end{equation}
\item[iii)]$1$ is not an eigenvalue of the matrix $\sum_{g\in G}\beta_{(0,g)}g^{-1}_{[s]}a_{[s]}$ for each $0\leq s<p$,
\end{enumerate}
then $f$ has accuracy $p$.
\end{theorem}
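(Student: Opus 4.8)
The plan is to reduce Theorem~\ref{teoaccu} to the characterization of accuracy already obtained for $\Gamma$-refinable functions, and in particular to the equivalent form (d) of Theorem~\ref{thm}. Specializing to $r=1$, where the vectors $v_\alpha$ and the coefficients $d_\gamma=c_\gamma$ are scalars, it suffices to exhibit a collection $\{v_\alpha:0\le|\alpha|<p\}$ with $v_0\neq0$ satisfying condition (d). Indeed, once such $v_\alpha$ are found, the equivalence (d)$\Leftrightarrow$(b) of Theorem~\ref{thm} (valid since $v_0\neq0$) gives $Y_{[s]}=A_{[s]}Y_{[s]}L$ for $0\le s<p$, which is exactly condition (ii) of statement (II) in the preceding characterization theorem. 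Together with $v_0\hat f(0)\neq0$ — obtained by choosing $v_0\neq0$ and invoking $\hat f(0)\neq0$, the standard nondegeneracy of a refinable scaling function that is necessary for any positive accuracy and compatible with i), since integrating the refinement equation yields $\hat f(0)\big(\sum_\gamma c_\gamma-m\big)=0$ — statement (II) holds, and the implication (II)$\Rightarrow$(I) gives accuracy $p$. Thus the entire proof is the recursive construction of the $v_\alpha$ from hypotheses i), ii), iii).

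First I would identify the index set appearing in (d). Since $A(b,l)A^{-1}=(AbA^{-1},Al)$ and $AGA^{-1}=G$, one gets $A\Gamma A^{-1}=G\ltimes A\Lambda$, so the $m$ left cosets $\Gamma_i=\gamma_iA\Gamma A^{-1}$ can be taken with pure-translation representatives $\gamma_i=\tau_{l_i}$, where $l_1,\dots,l_m$ represent $\mathcal{L}/A\mathcal{L}$; this matches the cosets $\Lambda_1,\dots,\Lambda_m$ of the statement. Writing $\gamma=(b,n)\in\Gamma_i$, the group law gives $n\in b^{-1}l_i+A\mathcal{L}$, and using $A^{-1}GA=G$ one checks $b^{-1}A\mathcal{L}=A\mathcal{L}$, so for each fixed $b\in G$ the translation part $n$ runs over a full coset $\Lambda_{j(i,b)}$ of $\mathcal{L}/A\mathcal{L}$ as $\gamma$ ranges over $\Gamma_i$.

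Next I would expand condition (d), namely $v_{[s]}=\sum_{\gamma\in\Gamma_i}\sum_{t=0}^{s}\Q_{[s,t]}(\gamma^{-1})A_{[t]}v_{[t]}\,c_{\gamma^{-1}}$, splitting the sum over $\gamma\in\Gamma_i$ into a sum over $b\in G$ and over $n$ in the corresponding coset. Since $\gamma^{-1}=(b^{-1},-bn)$, Definition~\ref{q-tilde} gives $\Q_{[s,t]}(\gamma^{-1})=Q_{[s,t]}(-bn)\,b_{[t]}$, whose entries are linear combinations of monomials $n^\delta$ with $|\delta|=s-t$. Hence every inner sum reduces to expressions $\sum_{n\in\Lambda_j}n^\delta c_{(b,n)^{-1}}$, which by hypothesis ii) all equal $\beta_{b,\delta}$ \emph{independently of the coset} $j$. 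This coset-independence is the crucial point: it makes the whole right-hand side of (d) independent of $i$, so the $m$ equations collapse to a single equation at each degree $s$. Isolating the $t=s$ term and using $\Q_{[s,s]}(\gamma^{-1})=b_{[s]}$ together with ii) for $\delta=0$, that term equals $\big(\sum_{b\in G}\beta_{b,0}b_{[s]}A_{[s]}\big)v_{[s]}=:N_{[s]}v_{[s]}$, so the degree-$s$ equation takes the triangular form $(I-N_{[s]})v_{[s]}=R_{[s]}(v_{[0]},\dots,v_{[s-1]})$ with $R_{[s]}$ depending only on the lower-degree vectors.

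Finally I would solve this system recursively. At $s=0$, summing ii) over the $m$ cosets and over $b\in G$ and using i) gives $\sum_{b\in G}\beta_{b,0}=1$, hence $N_{[0]}=1$ and $R_{[0]}=0$, so the equation $v_0=v_0$ is vacuous and we may fix $v_0\neq0$. For $1\le s<p$, hypothesis iii) states that $1$ is not an eigenvalue of $N_{[s]}$, i.e. $I-N_{[s]}$ is invertible, so $v_{[s]}=(I-N_{[s]})^{-1}R_{[s]}$ is uniquely determined by the already-constructed $v_{[t]}$, $t<s$. This yields the desired collection, condition (d) then holds for every $i$, and the reduction of the first paragraph completes the argument. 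The main obstacle is the bookkeeping of the third step: tracking how a single point-group element $b$ acts simultaneously inside $Q_{[s,t]}(-bn)$, inside $b_{[t]}$, and on the coset $b^{-1}l_i+A\mathcal{L}$, and checking that after these interactions the inner sums collapse to exactly the constants $\beta_{b,\delta}$ and the diagonal operator to exactly $\sum_{b\in G}\beta_{b,0}b_{[s]}A_{[s]}$.
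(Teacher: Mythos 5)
Your proposal is correct and is essentially the paper's own proof: the paper defines $M_{[s,t]}=\sum_{b\in G}\bigl[\sum_{l\in\Lambda_i}Q_{[s,t]}(l)c_{(b^{-1},l)}\bigr]b_{[t]}$ (independent of $i$ by hypothesis ii)), sets $v_{[s]}=\left(I-M_{[s,s]}A_{[s]}\right)^{-1}\sum_{t=0}^{s-1}M_{[s,t]}A_{[t]}v_{[t]}$ recursively, and verifies condition d) of Theorem~\ref{thm}, which is exactly your triangular system $(I-N_{[s]})v_{[s]}=R_{[s]}$ with $N_{[s]}=M_{[s,s]}A_{[s]}$, followed by the same reduction d) $\Rightarrow$ b) $\Rightarrow$ accuracy. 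Your explicit handling of the base case $s=0$ (where i) and ii) force $\sum_{b\in G}\beta_{b,0}=1$, so that equation is vacuous and $v_0\neq 0$ may be chosen freely) and your remark on $v_0\hat{f}(0)\neq 0$ address details the paper leaves implicit, but they do not change the route.
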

These conditions should be compared to Theorem 3.7 in \cite{CHM1}.
\begin{proof}

Note first that  the coefficients $d_\gamma$ are scalars, and hence commute with any matrix or vector.

Is not very difficult to show that if $\{\gamma_1,\dots,\gamma_{m}\}$ is a full set of digits of the left cosets of $\Lambda/a\Lambda$ it is also for the left cosets of $\Gamma/a\Gamma a^{-1}$.	

We define the matrices
$$M_{[s,t]}=\sum_{\gamma\in\Gamma_i}\Q_{[s,t]}(\gamma^{-1})d_{\gamma^{-1}}.$$
Note that  for $\gamma=(\ell,g)$, $\Q_{[s,t]}(\gamma^{-1})=g_{[s]}Q_{[s,t]}(-g(\ell))$, and therefore
$$M_{[s,t]}=\sum_{g\in G}\sum_{\ell\in\Lambda_i} g_{[s]}Q_{[s,t]}(-g(\ell))d_{(\ell,g)^{-1}},$$
and by Lemma~\ref{lem} $\Q_{[s,s]}(\gamma^{-1})=g_{[s]}$.

Since the coefficients $d_{\gamma^{-1}}$ satisfy (\ref{condicion}), the sum
$$\sum_{\ell\in\Lambda_i}g_{[s]}Q_{[s,t]}(-g(\ell))d_{(\ell,g)^{-1}},$$
is independent of $i$. Moreover, as $1$ is not an eigenvalue of $M_{[s,s]}a_{[s]}$,
$\left(I-M_{[s,s]}a_{[s]}\right)$ is invertible.

We shall define scalars $v_\alpha\in\C$ so that $v_{[s]}$ satisfies condition d) of Theorem~\ref{thm}.

Define $v_0=1$. It is not difficult to prove that, $\sum_{\gamma\in\Gamma_i}d_{\gamma^{-1}} =1,$ so $v_{[0]}=[v_0]=1$ satisfies  condition d).

Therefore, if we define the vectors $v_{[s]}$ recursively as
\begin{equation}
v_{[s]}:=\left(I-M_{[s,s]}a_{[s]}\right)^{-1}\sum_{t=0}^{s-1}M_{[s,t]}a_{[t]}v_{[t]},\label{esc}
\end{equation}
they will satisfy condition d) of Theorem~\ref{thm}. To see this, first rewrite (\ref{esc}) as
$$v_{[s]}=M_{[s,s]}a_{[s]}v_{[s]}+\sum_{t=0}^{s-1}M_{[s,t]}a_{[t]}v_{[t]}.$$
Now for $0\leq s<p$ and $i=1, \dots, m,$ let us  compute
\begin{eqnarray*}
\lefteqn{\sum_{\gamma\in\Gamma_i}\sum_{t=0}^s\Q_{[s,t]}(\gamma^{-1})a_{[t]}v_{[t]}d_{\gamma^{-1}}} \\
&=&\sum_{\gamma\in\Gamma_i}\Q_{[s,s]}(\gamma^{-1})a_{[s]}v_{[s]}d_{\gamma^{-1}}+\sum_{t=0}^{s-1}\sum_{\gamma\in\Gamma_i}\Q_{[s,t]}(\gamma^{-1})a_{[t]}v_{[t]}d_{\gamma^{-1}}\\
&=&\sum_{\gamma\in\Gamma_i}g_{[s]}a_{[s]}v_{[s]}d_{\gamma^{-1}}+\sum_{t=0}^{s-1}\left[\sum_{\gamma\in\Gamma_i}\Q_{[s,t]}(\gamma^{-1})d_{\gamma^{-1}}\right]a_{[t]}v_{[t]}\\
&=&M_{[s,s]}a_{[s]}v_{[s]}+\sum_{t=0}^{s-1}M_{[s,t]}a_{[t]}v_{[t]}\\
&=& v_{[s]} \quad \text{where this equality is true by the hypothesis that $\sum d_{\gamma^{-1}} = m$.}
\end{eqnarray*}

Therefore, by Theorem~\ref{teo-3} $f$ has accuracy $p.$
\end{proof}

\subsection{Special vector functions.}

In this section we apply Theorem \ref{teoaccu} to obtain accuracy conditions for a special case of vector (lattice)-refinable functions.

Given $(\Gamma,G,\Lambda)$ a splitting crystal triple, with the point group $G=\{g_1=Id,...,g_r\}$. In \cite{MQ16} the authors show that if we associate to a scalar function $f:\R^d \longrightarrow \C$ the vector valued function $F:\R^d \longrightarrow \C^r$, $F=(f\circ g_1^{-1},...,f\circ g_r^{-1})$, then these two functions have properties in common.

The following definition is important for our purpose.

\begin{definition}
Let $(\Gamma,G,\Lambda)$ be a splitting crystal triple and $G=\{g_1, g_2, \dots, g_r\}$. Let $a$ be a $\Gamma-$admissible matrix and $\{c_k\}_{k\in\Lambda}$, with $c_k\in \mathbb{C}^{r\times r}$. We will say that the matrices $c_k$ have \emph{$(\Gamma,a)-$symmetry}, if
$$c_{i,j}^k=c_{1,\rho_i(j)}^{g^{-1}_{h_i}(k)}\mbox{ for all $i,j=1,...,r$ and $k\in\Lambda.$}$$
where $h_i$ and $\rho_i$ are permutations of $\{1, \dots, r\}$ such that
$$g_{h_i}=ag_ia^{-1}\mbox{ and } g_{\rho_i(j)}=g^{-1}_{h_i}\circ g_j\mbox{ for each $i,j=1, \dots,r.$}$$
\end{definition}

In \cite{MQ16} it is shown that, under some (mild) conditions, $f$ is $\Gamma-$refinable if and only if $F$ is $\Lambda-$refinable. Precisely, they prove the following theorem.

\begin{theorem}\label{relacion}
 Let $(\Gamma,G,\Lambda)$ be a splitting crystal triple, $G=\{g_1=Id,...,g_r\}$, $a$ a $\Gamma-$admissible matrix and $m=|\det a|$.  We consider the sequence $\{c_\gamma\}_{\gamma\in\Gamma}\subset\C$ and $\{\widetilde{c}_k\}_{k\in\Lambda}\in \C^{r\times r}$, where the matrices $\widetilde{c}_k$ are related to the scalars $c_\gamma$ by the equality
$$\widetilde{c}_k=(c_{i,j}^{k})_{i,j=1,...,r}=\left(c_{(g^{-1}_{h_i}\circ g_j,g^{-1}_j(k))}\right)_{i,j=1, \dots, r}.$$
Then
\begin{enumerate}
  \item If $f:\R^d\rightarrow\C$ is $\Gamma-$refinable, then the function $F=(f,f\circ g_2^{-1},...,f\circ g_r^{-1})$ is $\Lambda-$refinable and the coefficients of the $\Lambda$-refinement equation have $(\Gamma, a)-$symmetry.
  \item If $\sum_{\gamma\in\Gamma}|c_\gamma|^2<m$ and $F=(f_1, \dots, f_r)\in L^2(\R^d,\C^r)$ is the solution of the refinement equation associated to the matrices $\{\widetilde{c}_k\}_{k\in\Lambda}$, then $F=(f,f\circ g_2^{-1},...,f\circ g_r^{-1})$ and the function $f=f_1$ is the solution of the $\Gamma-$refinement equation associated  to the scalars $\{c_\gamma\}_{\gamma\in\Gamma}$, i.e., $f$ is solution of
$$f(x)=\sum_{\gamma\in\Gamma}c_\gamma f(\gamma ax)\;\; \mbox{ a.e. }x\in\R^d.$$
\end{enumerate}
\end{theorem}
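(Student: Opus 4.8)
The plan is to treat the two implications separately: part (1) is a direct substitution-and-reindexing computation, while part (2) is a uniqueness argument driven by the $L^2$ hypothesis and the $(\Gamma,A)$-symmetry. Throughout I would use the semidirect-product conventions of Definition~\ref{split}: an element $\gamma=(g_a,l)$ acts by $\gamma(x)=g_a(x+l)$, its inverse acts by $\gamma^{-1}(z)=g_a^{-1}(z)-l$, and the point group satisfies $G\mathcal{L}=\mathcal{L}$.

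For part (1) I would start from the scalar $\Gamma$-refinement equation $f(y)=\sum_{\gamma\in\Gamma}c_\gamma f(\gamma^{-1}(Ay))$ and evaluate it at $y=g_i^{-1}(x)$ to compute the $i$-th component $F_i(x)=f(g_i^{-1}(x))$. Writing $\gamma=(g_a,l)$, the argument of $f$ becomes $g_a^{-1}Ag_i^{-1}(x)-l$. The key algebraic input is the admissibility relation: conjugation by $A$ is an automorphism of $\Gamma$ carrying the point group to itself, so $Ag_i^{-1}=g_{h_i}^{-1}A$ with $g_{h_i}=Ag_iA^{-1}$; substituting turns the argument into $(g_{h_i}g_a)^{-1}Ax-l$. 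I then reindex the point-group sum by $g_j=g_{h_i}g_a$ and change the lattice variable by $k=g_j(l)$, which is legitimate since $G\mathcal{L}=\mathcal{L}$ makes $l\mapsto g_j(l)$ a bijection of $\Lambda$. After these moves the argument reads $g_j^{-1}(Ax-k)$, i.e. that of $F_j(Ax-k)$, and the coefficient is exactly $c_{(g_{h_i}^{-1}g_j,\,g_j^{-1}(k))}=(\widetilde c_k)_{i,j}$, yielding $F(x)=\sum_{k\in\Lambda}\widetilde c_k F(Ax-k)$. The $(\Gamma,A)$-symmetry then drops out of the defining relation $g_{\rho_i(j)}=g_{h_i}^{-1}g_j$: inserting it into $(\widetilde c_{g_{h_i}^{-1}(k)})_{1,\rho_i(j)}$ collapses the translation part (using $g_{\rho_i(j)}^{-1}=g_j^{-1}g_{h_i}$) back to $g_j^{-1}(k)$ and recovers $(\widetilde c_k)_{i,j}$.

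For part (2) I would exploit the symmetry through uniqueness. The bound $\sum_\gamma|c_\gamma|^2<m$ is the standard condition under which the cascade operator $T_{\widetilde c}H=\sum_k\widetilde c_k\,H(A\cdot-k)$ has a unique normalized $L^2$ fixed point, so $F$ is characterized as that fixed point. The $(\Gamma,A)$-symmetry is precisely an equivariance statement: letting $G$ act on $L^2(\R^d,\C^r)$ by simultaneously permuting the components through $\rho_i,h_i$ and precomposing with the orthogonal maps $g\in G$, the identity $c_{i,j}^k=c_{1,\rho_i(j)}^{g_{h_i}^{-1}(k)}$ makes $T_{\widetilde c}$ commute with this action. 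Each such operator is an $L^2$-isometry, so it maps the normalized fixed point to another fixed point with $G$-permuted DC value $\hat F(0)$; once one checks that this DC vector is $G$-invariant (all components equal, as the symmetric structure forces), uniqueness yields that $F$ is invariant under the whole action. That invariance translates exactly into the component relations $f_i=f_1\circ g_i^{-1}$, i.e. $F=(f\circ g_1^{-1},\dots,f\circ g_r^{-1})$ with $f=f_1$. Substituting this special form into the $\Lambda$-equation and running the computation of part (1) in reverse then shows that $f$ solves the scalar $\Gamma$-refinement equation.

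I expect the genuine difficulty to sit in part (2), in pinning down existence, uniqueness, and normalization of the $L^2$ fixed point from the bound $\sum_\gamma|c_\gamma|^2<m$, and in transferring that control from the scalar $\Gamma$-mask to the matrix $\Lambda$-mask. A short bookkeeping computation shows that for each fixed $i$ the map $(j,k)\mapsto(g_{h_i}^{-1}g_j,\,g_j^{-1}(k))$ is a bijection onto $\Gamma$, so that $\sum_k\|\widetilde c_k\|_F^2=r\sum_\gamma|c_\gamma|^2$; the extra factor $r$ means the constant must be tracked with care, and the compatibility of the two normalizations of $\hat F(0)$ and $\hat f(0)$ must be verified explicitly. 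By contrast, part (1) is purely formal once the conjugation identity $Ag_i^{-1}=g_{h_i}^{-1}A$ and the lattice-invariance $G\mathcal{L}=\mathcal{L}$ are in hand; the only thing to watch there is the consistency of the index conventions $g_i$ versus $g_i^{-1}$ between the two statements.
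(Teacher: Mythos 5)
A preliminary point: this paper does not prove Theorem~\ref{relacion} at all --- it is quoted from \cite{MQ16} (``Precisely, they prove the following theorem''), so there is no internal proof to compare against, and your proposal has to stand on its own. Your part (1) does: evaluating the scalar refinement equation at $g_i^{-1}(x)$, using $Ag_i^{-1}=g_{h_i}^{-1}A$ (which is exactly $g_{h_i}=Ag_iA^{-1}$), and reindexing by $g_j=g_{h_i}g_a$ and $k=g_j(l)$ (legitimate since $G\mathcal{L}=\mathcal{L}$) yields $F(x)=\sum_{k\in\Lambda}\widetilde{c}_kF(Ax-k)$ with precisely the entries $(\widetilde{c}_k)_{i,j}=c_{(g_{h_i}^{-1}g_j,\,g_j^{-1}(k))}$ of the statement, and the $(\Gamma,A)$-symmetry then follows from $g_{\rho_i(j)}=g_{h_i}^{-1}g_j$ by the short computation you indicate. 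Your bookkeeping identity $\sum_k\|\widetilde{c}_k\|_F^2=r\sum_\gamma|c_\gamma|^2$ is also correct.

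Part (2), however, has two genuine gaps. First, you black-box the crucial analytic input: you assert that $\sum_\gamma|c_\gamma|^2<m$ is ``the standard condition'' under which the cascade operator $T_{\widetilde{c}}H=\sum_k\widetilde{c}_kH(A\cdot-k)$ has a unique normalized fixed point in $L^2$. This is not a standard fact. Elementary norm estimates on $T_{\widetilde{c}}$ produce $\ell^1$-type smallness conditions (of the form $\sum_\gamma|c_\gamma|<\sqrt m$), not $\ell^2$ ones; moreover $T_{\widetilde{c}}$ cannot be a strict contraction on all of $L^2$ (that would force $F=0$), so whatever uniqueness holds is of a subtler, restricted kind --- and establishing it from the $\ell^2$ hypothesis is exactly the nontrivial content of \cite{MQ16}, which a proof of this theorem cannot simply cite as folklore. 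Second, even granting uniqueness up to normalization, your equivariance argument only yields $U_gF=\lambda_gF$ for some character $\lambda:G\to S^1$: for example, with $G=\{Id,S\}$, an ``antisymmetric'' solution $f_2=-f_1\circ S$ is fully compatible with equivariance plus uniqueness-up-to-scalars. To force $\lambda_g\equiv1$, hence the claimed identities $f_i=f_1\circ g_i^{-1}$, you must show that $\hat F(0)$ is a nonzero multiple of $(1,\dots,1)^T$; this requires knowing $\hat F(0)\neq0$ and analyzing the eigenvalue-$1$ eigenspace of $\frac1m\sum_k\widetilde{c}_k$ (each of whose rows sums to $\frac1m\sum_\gamma c_\gamma$), which needs normalization hypotheses such as $\sum_\gamma c_\gamma=m$ and simplicity of that eigenvalue. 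Your parenthetical ``as the symmetric structure forces'' is not an argument, and without it the decisive step --- from uniqueness to the symmetric form of $F$ --- does not go through; once that form is in hand, your reversal of part (1) to get the scalar equation for $f=f_1$ is indeed routine.
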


From  Theorem \ref{relacion} together with Theorem \ref{teoaccu}, we present a much simpler condition for characterizing the accuracy of some special functions $F:\R^d\rightarrow\C^r$.

\begin{theorem} \label{simple}
Let $(\Gamma,G,\Lambda)$ be a splitting crystal triple and $G=\{g_1=Id, g_2, \dots, g_r\}$. Let $a$ be a $\Gamma-$admissible matrix and $m=|\det a|$. Let $F:\R^d\rightarrow\C^r$ be a function such that $F=(f,f\circ g_2^{-1}, \dots, f\circ g_r^{-1})$, is $\Lambda-$refinable and the coefficients  $\widetilde{c}_k$ of the $\Lambda$-refinement equation have $(\Gamma, a)-$symmetry. Consider the scalars $c_\gamma=c_{(l,g_i)}=\widetilde{c}^{g_i(l)}_{1,i}=(\widetilde{c}_{g_i(l)})_{1,i}$, generated by the matrices $\widetilde{c}_k$. If the sequence $\{c_\gamma\}_{\gamma\in\Gamma}$ satisfies the hypothesis of Theorem~\ref{teoaccu} and $\sum_{\gamma\in\Gamma}|c_\gamma|^2<m$, then $F$ has accuracy $p$.
\end{theorem}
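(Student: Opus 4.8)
The plan is to reduce the accuracy of the vector function $F$ to the accuracy of the scalar function $f=f_1$, which I can already control by the previously established machinery. The proof therefore has three movements: first, recover that $f$ is $\Gamma$-refinable with the scalar coefficients $c_\gamma$; second, invoke Theorem~\ref{teoaccu} to conclude that $f$ has accuracy $p$ as a $\Gamma$-refinable function; and third — the genuinely new step here — transfer this accuracy from the $\Gamma$-translates of $f$ to the $\Lambda$-translates of $F$ by identifying the two generating families of functions.

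For the first movement I would argue as follows. By hypothesis $F=(f\circ g_1,\dots,f\circ g_r)$ is $\Lambda$-refinable with coefficients $\widetilde{c}_k$ possessing $(\Gamma,A)$-symmetry, the scalars $c_\gamma=c_{(g_i,l)}=(\widetilde{c}_{g_i(l)})_{1,i}$ are precisely those generated by the $\widetilde{c}_k$, and $\sum_{\gamma\in\Gamma}|c_\gamma|^2<m$. These are exactly the hypotheses of part (2) of Theorem~\ref{relacion}, which yields that $f=f_1$ solves the $\Gamma$-refinement equation $f(x)=\sum_{\gamma\in\Gamma}c_\gamma f(\gamma^{-1}Ax)$. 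Since by assumption the sequence $\{c_\gamma\}_{\gamma\in\Gamma}$ satisfies conditions (i)--(iii) of Theorem~\ref{teoaccu}, that theorem gives directly that the scalar $\Gamma$-refinable function $f$ has accuracy $p$; in other words every polynomial $q$ with $\deg(q)<p$ lies in $S(f)=\overline{\mathrm{span}}\{\sum_\gamma d_\gamma f(\gamma x)\}$.

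The third movement is the transfer, and it rests on a structural identity. For $\gamma=(g_i,l)\in\Gamma=G\ltimes\Lambda$ we have $\gamma(x)=g_i(x+l)$, so $f(\gamma x)=f(g_i(x+l))$, which is exactly the $i$-th component of the $\Lambda$-translate $F(x+l)$. Writing $e_i^T$ for the row vector selecting the $i$-th coordinate, one gets $f(\gamma x)=e_i^T F(x+l)$, and hence, grouping an arbitrary finite combination of $\Gamma$-translates of $f$ by the translation part $l\in\mathcal{L}$,
\[
\sum_{\gamma\in\Gamma}d_\gamma f(\gamma x)=\sum_{l\in\mathcal{L}}\Bigl(\sum_{i=1}^r d_{(g_i,l)}e_i^T\Bigr)F(x+l)=\sum_{l\in\mathcal{L}}b_l\,F(x+l),
\]
with $b_l=\sum_i d_{(g_i,l)}e_i^T\in\C^{1\times r}$. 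This exhibits every finite $\Gamma$-combination of $f$ as a finite $\Lambda$-combination of $F$, so $S(f)\subseteq S(F)$; using $G\mathcal{L}=\mathcal{L}$ (Remark~\ref{2.3}) the reverse inclusion holds as well, so in fact $S(f)=S(F)$. Consequently every polynomial of degree $<p$, being in $S(f)$, lies in $S(F)$, and $F$ has accuracy $p$.

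The main obstacle, and the point I would write out most carefully, is this last re-indexing: one must check that the substitution $\gamma=(g_i,l)\mapsto(i,l)$ is a genuine bijection between the index set of $\Gamma$-translates of $f$ and the index set of coordinate-extracted $\Lambda$-translates of $F$, and that the passage to closures is compatible (the identity above is an exact equality of finite sums, so closures match and no analytic subtlety enters beyond the identification of the two spanning sets). Everything else is a routine application of Theorems~\ref{relacion} and~\ref{teoaccu}.
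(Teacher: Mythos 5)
Your proof is correct and takes essentially the same route as the paper's: Theorem~\ref{relacion}(2) recovers $f=f_1$ as a $\Gamma$-refinable function with coefficients $c_\gamma$, Theorem~\ref{teoaccu} then gives $f$ accuracy $p$, and your regrouping $\sum_{\gamma\in\Gamma}d_\gamma f(\gamma x)=\sum_{l\in\mathcal{L}}b_l F(x+l)$ is precisely the paper's identity (\ref{12}). The only (harmless) difference is that you also establish the reverse inclusion $S(F)\subseteq S(f)$, which the paper does not need, since one direction already shows $F$ reproduces every polynomial that $f$ does.
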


Compare this to the conditions of Theorem 3.4 in \cite{CHM1}. The conditions of the previous Theorem are clearly much easier to check!

\begin{proof}
Without loss of generality, we assume that $g_1=Id$. By Theorem~\ref{relacion} $f=f_1$ is a $\Gamma-$refinable function, and  $\{c_\gamma\}_{\gamma\in\Gamma}$ are the coefficients of the $\Gamma-$refinement equation. Further $\{c_\gamma\}_{\gamma\in\Gamma}$ satisfy the hypothesis of Theorem~\ref{teoaccu}, therefore the function $f$ has accuracy $p$.

To show that $F$ has accuracy $p$ let $P(x)$ a polynomial of degree less than $p$. Then
\begin{equation}
P(x) = \sum_{\gamma\in\Gamma}c_{\gamma}f(\gamma(x)) = \sum_{k\in \Lambda}\sum_{i=1}^n c_{(k,g_i)}f(g_i^{-1}(x-k)) = \sum_{k\in \Lambda}C_k F(x-k).\label{12}
\end{equation}
Then $F$ reproduces the same polynomials than $f$. Therefore $F$ has accuracy $p$.
\end{proof}

From equality (\ref{12}) we have in fact the following result.

\begin{cor}\label{equiv}
Let $(\Gamma,G,\Lambda)$ be a splitting crystal triple and $G=\{g_1={\rm Id}, g_2, \dots, g_r\}$. Let $a$ be a $\Gamma-$admissible matrix and $m=|\det A|$. Let $f:\R^d\rightarrow\C$, $f\in L^2(\R^d)$ and $F:\R^d\rightarrow\C^r$ be defined by $F=(f,f\circ g_2^{-1},...,f\circ g_r^{-1})$. Then $f$ has accuracy $p$ if and only if $F$ has accuracy $p$.
\end{cor}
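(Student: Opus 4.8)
The plan is to show that the two generating systems produce, as sets, exactly the same finite linear combinations, so that the two closed spaces coincide and accuracy—being a property of the spanned space—transfers for free. The engine is the identity already displayed in (\ref{12}). Because $\Gamma=G\ltimes\Lambda$, every $\gamma\in\Gamma$ is written uniquely as $\gamma=(g_i,k)$ with $g_i\in G$ and $k\in\Lambda$, and then $\gamma(x)=g_i(x+k)$ by Definition~\ref{split}. Hence
\[
f(\gamma(x))=f(g_i(x+k))=(f\circ g_i)(x+k)=F_i(x+k),
\]
so each $\Gamma$-translate of the scalar $f$ is precisely one component of a $\Lambda$-translate of the vector $F$.

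First I would make the correspondence between coefficients explicit. Given any finite collection of scalars $\{c_\gamma\}_{\gamma\in\Gamma}$, set $C_k=(c_{(g_1,k)},\dots,c_{(g_r,k)})\in\C^{1\times r}$ for $k\in\Lambda$; the unique decomposition $\gamma=(g_i,k)$ shows that $\{c_\gamma\}_{\gamma\in\Gamma}\leftrightarrow\{C_k\}_{k\in\Lambda}$ is a bijection that carries finitely supported collections to finitely supported collections in both directions. By the identity above,
\[
\sum_{\gamma\in\Gamma}c_\gamma f(\gamma(x))=\sum_{k\in\Lambda}\sum_{i=1}^{r}c_{(g_i,k)}F_i(x+k)=\sum_{k\in\Lambda}C_kF(x+k).
\]
Therefore the set of all finite linear combinations of $\Gamma$-translates of $f$ equals, as a set of functions, the set of all finite linear combinations of $\Lambda$-translates of $F$. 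Passing to closures gives $S(f)=S(F)$, where $S(F)$ denotes the closed span of the $\Lambda$-translates of $F$ (the space relevant to the accuracy of the $\Lambda$-refinable function $F$).

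Since accuracy is by definition the largest integer $p$ for which every multivariate polynomial of degree $<p$ lies in the generated space, and $S(f)$ and $S(F)$ are the very same space, I would conclude at once that $f$ has accuracy $p$ if and only if $F$ has accuracy $p$. There is no genuine obstacle here: the only point requiring care is the bookkeeping of the bijection, namely verifying that the passage $\{c_\gamma\}\leftrightarrow\{C_k\}$ is well defined and invertible and respects finite support, which is immediate from the splitting hypothesis $\Gamma=G\ltimes\Lambda$.
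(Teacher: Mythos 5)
Your proof is correct and is essentially the paper's own argument: the paper deduces the corollary directly from equality~(\ref{12}), which is precisely the coefficient bijection $\{c_\gamma\}_{\gamma\in\Gamma}\leftrightarrow\{C_k\}_{k\in\Lambda}$ and the identity $f(\gamma(x))=F_i(x+k)$ for $\gamma=(g_i,k)$ that you spell out. You merely make explicit what the paper leaves implicit, namely that this correspondence gives equality of the spanned spaces and hence equality of accuracy in both directions.
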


\subsection{Accuracy and Order of Approximation.} \label{approx-accuracy}

The notion of accuracy has been studied before in the context of approximation theory and can be related to properties of the space $\mathcal S(f)$ (see equation (\ref{SF})). In this section we will discuss the connection between accuracy and order of approximation for crystal-invariant spaces. We will state our results for $L^2(\R^d)$, but it can also be formulated for $L^q(\R^d), q \geq 1$.

Let $\mathcal{S} :=\mathcal{S}(F)\cap L^2(\R^d)$, and set $\mathcal{S}^h=\{g(x/h):g\in\mathcal{S}\}$. Let $W_n^2(\R^d)$ denote the Sobolev space consisting of all functions whose weak derivatives up to order $n$ all lie in $L^2(\R^d)$.

\begin{definition}
  We say that $\mathcal{S}(F)$ provides $L^2-${\em approximation order} $n$ if for each $g\in W^2_n(\R^d)$ there exists a constant $c_g$ independent of $h$ such that
  \[\mbox{for all } h>0; \inf_{k\in\mathcal{S}^h}\|g-k\|_2\leq c_g h^n.\]

  We say that $\mathcal{S}(F)$ provides $L^2-${\em density order} $n$ if for each $g\in W^2_n(\R^d)$
  \[\lim_{h\rightarrow0}(\inf_{k\in\mathcal{S}^h}\|g-k\|_2)/h^r=0.\]
\end{definition}

Let us recall the general Poisson formula for a function $f\in L^1(\R^n)$ and a lattice $\Ll$. Consider  $f$ with compact support, $\Ll$ a lattice and $\Ll^*$ its dual. We then have
	$$\sum_{k\in L}f(x+k)=|L|^{-1}\sum_{l\in L^*}\widehat{f}(l)e^{-2\pi i\langle l,x\rangle},$$
where $L$ is a fundamental domain of $\Ll$.


Now, we recall the Strang-Fix conditions for a single function $f:\R^d\rightarrow\C$ and a vector function $F:\R^d\rightarrow\C^r$, and generalize them to the crystal setting.

\begin{definition}
  Let $f:\R^d\rightarrow\C$ be a compactly supported function in $L^2(\R^d)$, $\Ll$ a lattice, $\Ll^*$ its dual and $\alpha$ a multi-index, we say that $f$ satisfies the Strang-Fix conditions of order $n$ if
  \begin{equation}\label{Strang}
    \widehat{f}(0)\neq0 \mbox{ and }D^\alpha\widehat{f}(\ell)=0,\mbox{ for all }\ell\in\Ll^*,0\leq |\alpha|\leq n-1.
  \end{equation}
  Let $F=(f_1, \dots, f_r)^t:\R^d\rightarrow\C^r$ be a vector of compactly supported functions, we say that $F$ satisfies the Strang-Fix conditions of order $n$ if there exists a function $g$ which is a finite linear combination of lattice translates of $f_1, \dots, f_r$, i.e.,
  \[g(x)=\sum_{i=1}^r\sum_{k\in M}c_{k,i}f_i(x-k),\]
  and which satisfies the Strang-Fix conditions (\ref{Strang}), where $M$ is a finite subset of $\Ll$.
  We say that $F$ satisfies the {\em crystal Strang-Fix conditions}, if $F=(f\circ{g_1^{-1}}, \dots, f\circ{g_r^{-1}})^t:\R^d\rightarrow\C^r$, with $g_i \in $ the point group of $\Gamma$,  and $F$ satisfies the Strang-Fix conditions for the lattice $\Ll=\Lambda$ associated to $\Gamma$.
\end{definition}

Before stating the main theorem of this section, we show the relation between accuracy and Strang-Fix condition for a function $f$, in the context of translations.

\begin{theorem}
		 Let $f:\R^n\rightarrow\mathbb{C}$ a function with compact support such that $x^\alpha f(x)\in L^1(\R^n)$, for all multi-indices $\alpha$ with $|\alpha|\leq p-1$, then the following  are equivalent:
		 \begin{enumerate}
		 	\item $f$ satisfies the Strang-Fix conditions of orden $p$
		 	\item For each multi-index $\alpha$ with $|\alpha|\leq p-1$, $\displaystyle\sum_{k\in L} k^\alpha f(x-k)$ is a polynomial of degree $|\alpha|$, moreover the coefficient of $x^\alpha$ is non-zero.
		 \end{enumerate}
	\end{theorem}

\begin{proof}
Since $x^{\alpha} f(x)\in L^1(\R^n)$ we have that $D^\alpha\widehat{f}(x)=\frac{\partial^{|\alpha|}\widehat{f}}{\partial x^{\alpha}}(x)$ exists for each $x\in\R^d$.
		We consider the function $\psi(y)=y^\alpha f(x-y)$ where $x\in \R^n$ is fixed. Its Fourier transform is
		\begin{eqnarray*}
			\widehat{\psi}(\xi)&=&\int_{\R^n}f(x-y)y_1^{\alpha_1}...y_n^{\alpha_n} e^{-2\pi i y_1\xi_1}...e^{-2\pi i y_n\xi_n}dy\nonumber\\
			&=&\int_{\R^n}f(x-y)\frac{1}{(-2\pi i)^{|\alpha|}}\frac{\partial^{|\alpha|}}{\partial(\xi^\alpha)}(e^{-2\pi i\langle y,\xi\rangle})dy\nonumber\\
			&=&\frac{1}{(-2\pi i)^{|\alpha|}}\frac{\partial^{|\alpha|}}{\partial(\xi^\alpha)}\left(\int_{\R^n}f(x-y)e^{-2\pi i\langle y,\xi\rangle}dy\right)\nonumber\\
			&=& \frac{1}{(-2\pi i)^{|\alpha|}}\frac{\partial^{|\alpha|}}{\partial(\xi^\alpha)}
			\left(e^{-2\pi i\langle y,\xi\rangle}\widehat{f}(-\xi)\right).\nonumber
		\end{eqnarray*}
		
		Then,  the by the Poisson formula for $\psi$ for each $x$ we have		
		\begin{eqnarray}\label{pois}
		\lefteqn{\sum_{k\in L}k^\alpha f(x-k) = } \nonumber\\
		&=&\frac{|P|^{-1}}{(-2\pi i)^{|\alpha|}}\sum_{l\in L^*}D^\alpha \left.(e^{-2\pi i\langle x,\xi\rangle}\widehat{f}(-\xi))\right|_{\xi=l} \nonumber\\
		&=&\frac{|P|^{-1}}{(-2\pi i)^{|\alpha|}}\sum_{l\in L^*}\left.\left(\sum_{\beta\leq\alpha}\left(\begin{array}{c}
		\alpha \\
		\beta \\
		\end{array}\right)D^\beta(e^{-2\pi i\langle x,\xi\rangle})D^{\alpha-\beta}\widehat{f}(-\xi)\right)\right|_{\xi=l}.
		\end{eqnarray}
		
		By hypothesis, in this last sum the only non-zero terms are those corresponding to $l=0$. Therefore
		$$\sum_{k\in L}k^\alpha f(x-k)=\frac{|P|^{-1}}{(-2\pi i)^{|\alpha|}}\sum_{\beta\leq\alpha}(-2\pi i)^{|\beta}x^\beta(-1)^{|\alpha|-|\beta|}D^{\alpha-\beta}\widehat{f}(0),$$
		which is a polynomial in $x$ of degree $|\alpha|$ because when $\beta=\alpha$ the coefficient is $|L|^{-1}\widehat{f}(0)\neq0$.
		
		Now we assume that 2. holds. Taking $\alpha=0$ in (\ref{pois}) we have that the Fourier series in $L^2(\R^n/\Ll)$ of the constant function $\displaystyle\sum_{k\in L}f(x-k)$ is $|L|^{-1}\sum_{l\in L^*}e^{-2\pi i\langle x,l\rangle}\widehat{f}(-l)$, and therefore $\widehat{f}(0)\neq0$ and $\widehat{f}(l)=0$ for all $l\in \Ll^*$ and $l\neq0$.
		
		We now consider the multi-index $\alpha=(1,0,...,0)$. Then
		\begin{eqnarray*}
		\frac{|P|^{-1}}{(-2\pi i)}\left(\sum_{l\in L^*-{0}}\left(e^{-2\pi i\langle x,l\rangle}(-1)\frac{\partial\widehat{f}}{\partial\xi_1}(-l)+(-2\pi i)x_1 e^{-2\pi i\langle x,l\rangle}\widehat{f}(-l)\right)+\right.\\
		\left.+(-1)\frac{\partial\widehat{f}}{\partial\xi_1}(0)+(-2\pi i)x_1\widehat{f}(0)\right),
		\end{eqnarray*}
		is a polynomial whose main coefficient  is $cx$ with $c\neq0$ and $\widehat{f}(l)=0$ if $l\neq0$. Therefore  $\frac{\partial\widehat{f}}{\partial\xi_1}(l)=0$ for $l\neq0$ and $\widehat{f}(0)\neq0.$ Repeating this argument for $\alpha=e_i$ where $e_i\in\R^d$ is the vector with entries $0$ in the place $j\neq i$ and $1$ in the place $i$, we obtain that $f$ satisfies the Strang-Fix contions of order $p$.
	\end{proof}

The following result shows that if $f$ is a crystal-refinable function with compact support and $\Gamma-$independent translates, then order of approximation, density order, Strang-Fix conditions and accuracy are equivalent.
\begin{remark}\label{obsacu}
  When $\Gamma$ consists only of translations, i.e. $G=\{g_1=Id\}$, this theorem was proved in \cite{Jia}.
\end{remark}

\begin{theorem} \label{teo-accu-approx}
  Let $(\Gamma, G, \Lambda)$ be a splitting crystal triple, $G=\{g_1=Id, \dots, g_r\}$ and $f\in L^2(\R^d)$ be a function with compact support and $\Gamma-$independent translates. We consider the function $F=(f, f\circ g_2^{-1},...,f\circ g_r^{-1})$. The following statements are equivalent:
  \begin{enumerate}
    \item $f$ has accuracy $p$.
    \item $\mathcal{S}(F)$ provides $L^2$-density order $p-1$.
    \item $\mathcal{S}(F)$ provides $L^2$-approximation order $p$.
    \item $F$ satisfies the Strang Fix conditions of order $p$
  \end{enumerate}
\end{theorem}
\begin{proof}
  If $f$ has $\Gamma-$independent translates it is immediate that the vector-function $F$ has independent translates with respect to the lattice $\Lambda$ associated to $\Gamma$.
  By Corollary \ref{equiv} we know that $f$ has accuracy $p$ if and only if $F$ has accuracy $p$.
  Therefore, by Remark \ref{obsacu}, it is equivalent for $f$ to have accuracy $p$, that $\mathcal{S}(F)$ provides $L^2$-approximation order $p$, which in turn is equivalent to $\mathcal{S}(F)$ providing $L^2$-density order $p-1$, and this is equivalent to $F$ satisfying the Strang-Fix conditions of order $p$.
  Therefore $f$ has accuracy $p$ if and only if $\mathcal{S}(F)$ provides $L^2$-approximation order $p$ if and only if $\mathcal{S}(F)$ provides $L^2$-density order $p-1$, if and only if $F$ satisfies the Strang-Fix conditions of order $p$.
\end{proof}

\section{Statements of the Theorems for the multi function case}

The main theorems of this paper, can be extended, using the techniques introduced in \cite{CHM1} for the case of vector-valued functions. We will state the theorems in full generality, but leave the proofs for the interested reader.

We will say that a vector valued function: $\phi(x) : \R^d \longrightarrow \C^\ell, \phi(x) := (\varphi_1(x) \dots, \varphi_\ell(x))^T$ is $\Gamma-$refinable, if it satisfies the refinement equation:
\begin{equation}\label{ec_ref}
  \phi(x)=\sum_{\gamma\in\Gamma'}d_\gamma \phi(\gamma^{-1}ax),
\end{equation}
for some finite $\Gamma'\subset\Gamma$, and matrices $d_{\gamma} \in \C^{\ell\times \ell}$. These matrices $d_{\gamma}$ are called \emph{coefficients of the refinement equation}.

Given a collection
$$\{v_\al=(v_{\al,1},...,v_{\al,\ell})\in\C^{1\times \ell}:0\leq|\al|<p\},$$
of row vectors of length $\ell$, we group the $v_\al$ by degree to form $d_s\times1$ column vectors $v_{[s]}$ with block entries that are the $1\times \ell$ row vectors $v_\al$. Specifically, we set

$$  v_{[s]}=[v_\al]_{|\al|=s}, \ \ 0\leq s<p.$$

Note that, when $\al=0$ then $v_{[0]}=[v_0]=v_0$.

We define the matrices $y_{[s]}(\gamma)$ as before by
\[
y_{[s]}(\gamma)=\sum_{t=0}^s\Q_{[s,t]}(\gamma)v_{[t]},\]
but noting that now $v_{[t]}$ are matrices of size $d_t\times \ell$.

\begin{theorem}
Assume that $f:\mathbb{R}^{d}\rightarrow\mathbb{C}^{\ell}$ is integrable, compactly supported and satisfies
the refinement equation (\ref{ec_ref}).  Consider the following statements

\begin{itemize}
\item[I)] $f$ has accuracy $p$.

\item[II)] There exist a collection of row vectors $\{v_{\alpha }\in\mathbb{C}^{1\times \ell}:0\leq \left\vert \alpha \right\vert <p\}$
such that

\begin{itemize}
\item[(i)] $v_{0}\hat{f}(0)\neq 0$ and

\item[(ii)] $Y_{[s]}=a_{[s]}Y_{[s]}L$ for $0\leq s<p$
where $Y_{[s]}=(y_{[s]}(\gamma ))_{\gamma \in \Gamma }$ as in (\ref{y-s}) and (\ref{Y-s}).
\end{itemize}
\end{itemize}

Then we have the following:

\begin{itemize}
\item[a)] If the translates of $f$ along $\Gamma$ are independent, then \emph{(I)}
implies \emph{(II)}.

\item[b)] \emph{(II)} implies \emph{(I)}. In this case, if we
scale all the vectors $v_{\alpha }$ by
$C=(v_{0}\hat{f}(0))^{-1}\left\vert P\right\vert $ then
$$
X_{[s]}(x)=\sum\limits_{\gamma \in \Gamma }y_{[s]}(\gamma
)f(\gamma (x))=Y_{[s]}F(x)\mbox{, }0\leq s<p.
$$
\end{itemize}
\end{theorem}

As for the single function case, this theorem can be simplified so that, if one wants to check for accuracy $p$, one does not need to check {\bf all} conditions $0\leq s < p$, but it is enough to check them for $s=p-1$.

\begin{theorem} Assume that $f:\mathbb{R}^{d}\rightarrow\mathbb{C}^{m}$ is integrable, compactly supported and satisfies
the refinement equation (\ref{ec_ref}).
Let $\de=\left\vert \det a\right\vert ,$ and let $\gamma_{1}, \dots,\gamma _{\de}$
 be a full set of digits of the left cosets of $\Gamma $. Here,  the left cosets $\Gamma_i$ are $\Gamma _{i}=\gamma _{i}a\Gamma a^{-1}, i = 1, \dots, \de.$

Given a collection $\{v_{\alpha }\in\mathbb{C}^{1\times r}:0\leq \left\vert \alpha \right\vert <p\}$ of row vectors, let\\
$y_{[s]}(\gamma)=\sum\limits_{t=0}^{s}\Q_{[s,t]}(\gamma)v_{[t]}$ and  $Y_{[s]}=(y_{[s]}(\gamma ))_{\gamma \in \Gamma }.$

If $v_{0}\neq 0,$ then the following statements are equivalent:

\begin{itemize}
\item[a)] $Y_{[p-1]}=a_{[p-1]}Y_{[p-1]}L$.
Equivalently, \\$%
y_{[p-1]}(\sigma )=a_{[p-1]}\sum\limits_{\gamma \in \Gamma
}y_{[p-1]}(\gamma )d_{a\gamma a^{-1}\sigma ^{-1}}$ for $\sigma
\in \Gamma. $

\item[b)] $Y_{[s]}=A_{[s]}Y_{[s]}L$ for $0\leq s<p$. Equivalently, \\
$%
y_{[s]}(\sigma )=A_{[s]}\sum\limits_{\gamma \in \Gamma
}y_{[s]}(\gamma )d_{A\gamma A^{-1}\sigma ^{-1}}$ for $\sigma
\in \Gamma. $ \item[c)] $y_{[s]}(\gamma _{i})=A_{[s]}\sum\limits_{\gamma \in \Gamma
}y_{[s]}(\gamma )d_{A\gamma A^{-1}\gamma _{i}^{-1}}$ for $0\leq s<p$ and $i=1,...,\de.$

\item[d)] $v_{[s]}=\sum\limits_{\gamma \in \Gamma
_{i}}\sum\limits_{t=0}^{s}\Q_{[s,t]}(\gamma^{-1})A_{[t]}v_{[t]}d_{\gamma
^{-1}}$ for $0\leq s<p$ and $i=1, \dots, \de.$
\end{itemize}
\end{theorem}

\section{Acknowledgements}
The authors gratefully acknowledge support from MinCyT, ANPCyT PICT2014-1480 and UBA, UBACyT 20020130100403BA.

\end{document}